\algrenewcommand\algorithmicrequire{\textbf{Input:}}
\algrenewcommand\algorithmicensure{\textbf{Output:}}
\crefname{equation}{}{} % To comply with IEEE standard
\newlist{identities}{enumerate}{1}
\setlist[identities,1]{
    label={\textbf{ID \arabic*}},
    ref=\arabic*, % if to be used with \cref, don't provide label string or parentheses
    wide,itemsep=0pt,topsep=0pt}
\crefname{identitiesi}{ID}{IDs} % singular and plural forms of text labels
\Crefname{identitiesi}{ID}{IDs}
\pgfplotsset{compat=1.18}
\newtheorem{theorem}{Theorem}
\newtheorem{definition}{Definition}
\newtheorem{proposition}{Proposition}
\newtheorem{remark}{Remark}
\newtheorem{corollary}{Corollary}
\newtheorem{lemma}{Lemma}
\crefname{lemma}{Lemma}{Lemmas} % singular and plural forms of text labels
\Crefname{lemma}{Lemma}{Lemmas}
\newcommand*\circled[1]{
\tikz[baseline=(char.base)]{\node(char)[shape=rounded rectangle,draw,inner sep=0.6pt,minimum height=1.5ex]{#1};}
} % 
\newcommand\kronF[2]{{#1}^{\circled{\tiny{\ensuremath{#2}}}}} % Kronecker
\renewcommand\Vec[1][]{\textnormal{vec}\ensuremath{\if$#1$ \else \left[#1\right]\fi}}
\newcommand{\real}{\mathbb{R}}
\newcommand{\rd}{\text{\upshape d}} % Used for "d" in integration "dx".)
\newcommand{\bzero}{\ensuremath{\mathbf{0}}} % Bold zero often needed
\newcommand{\tv}{\ensuremath{\tilde{v}}}
\newcommand{\tw}{\ensuremath{\tilde{w}}}
\newcommand{\bA}{\ensuremath{\mathbf{A}}}
\newcommand{\bB}{\ensuremath{\mathbf{B}}}
\newcommand{\bC}{\ensuremath{\mathbf{C}}}
\newcommand{\bD}{\ensuremath{\mathbf{D}}}
\newcommand{\bF}{\ensuremath{\mathbf{F}}}
\newcommand{\bG}{\ensuremath{\mathbf{G}}}
\newcommand{\bH}{\ensuremath{\mathbf{H}}}
\newcommand{\bI}{\ensuremath{\mathbf{I}}}
\newcommand{\bP}{\ensuremath{\mathbf{P}}}
\newcommand{\bQ}{\ensuremath{\mathbf{Q}}}
\newcommand{\bS}{\ensuremath{\mathbf{S}}}
\newcommand{\bV}{\ensuremath{\mathbf{V}}}
\newcommand{\bW}{\ensuremath{\mathbf{W}}}
\newcommand{\ba}{\ensuremath{\mathbf{a}}}
\renewcommand{\bf}{\ensuremath{\mathbf{f}}}
\newcommand{\bg}{\ensuremath{\mathbf{g}}}
\newcommand{\bh}{\ensuremath{\mathbf{h}}}
\newcommand{\bu}{\ensuremath{\mathbf{u}}}
\newcommand{\bv}{\ensuremath{\mathbf{v}}}
\newcommand{\bw}{\ensuremath{\mathbf{w}}}
\newcommand{\bx}{\ensuremath{\mathbf{x}}}
\newcommand{\by}{\ensuremath{\mathbf{y}}}
\newcommand{\cE}{\ensuremath{\mathcal{E}}}
\newcommand{\cH}{\ensuremath{\mathcal{H}}}
\newcommand{\cL}{\ensuremath{\mathcal{L}}}
\newcommand{\perm}[2]{\ensuremath{\bS_{#1 \times #2}}}
\newcommand{\G}{\ensuremath{\mathbf{G}}}
\newcommand{\Hx}{\ensuremath{\mathbf{H}}}
\def\BibTeX{{\rm B\kern-.05em{\sc i\kern-.025em b}\kern-.08em
T\kern-.1667em\lower.7ex\hbox{E}\kern-.125emX}}
\begin{document}
\title{Scalable Computation of $\cH_\infty$ Energy Functions for Polynomial Control-Affine Systems}
\author{Nicholas A. Corbin, Boris Kramer
    % \thanks{Submitted to IEEE Transactions on Automatic Control}
    \thanks{This work was supported by the National Science Foundation under Grant CMMI-2130727.}
    \thanks{N. Corbin and B. Kramer are with the Department of Mechanical and Aerospace Engineering, University of California San Diego, La Jolla, CA 92093-0411 USA (e-mail: ncorbin@ucsd.edu, bmkramer@ucsd.edu).}
}

\maketitle

\begin{abstract}
    We present a scalable approach to computing nonlinear balancing energy functions for control-affine systems with polynomial nonlinearities.
    Al'brekht's power-series method is used to solve the Hamilton-Jacobi-Bellman equations for polynomial approximations to the energy functions.
    The contribution of this article lies in the numerical implementation of the method based on the Kronecker product, enabling scalability to over 1000 state dimensions.
    The tensor structure and symmetries arising from the Kronecker product representation are key to the development of efficient and scalable algorithms.
    We derive the explicit algebraic structure for the equations, present rigorous theory for the solvability and algorithmic complexity of those equations, and provide general purpose open-source software implementations for the proposed algorithms.
    The method is illustrated on two simple academic models, followed by a high-dimensional semidiscretized PDE model of dimension as large as $n=1080$.
\end{abstract}

\begin{IEEEkeywords}
    nonlinear dynamical systems, nonlinear balancing, HJB equations, polynomial approximations.
\end{IEEEkeywords}

%%%%%%%%%%%%%%%%%%%%%%%%%%%%%%%%%%%%%%%%%%%%%%%%%%%%%%%%%%%%%%%%%%%%%%%%%%%%%%%%%%%%%%%%%%%%%%%%%%%%%%%%%%%%%%%
%%%%%%%%%%%%%%%%%%%%%%%%%%%%%%%%%            Main text            %%%%%%%%%%%%%%%%%%%%%%%%%%%%%%%%%%%%%%%%%%%%%
%%%%%%%%%%%%%%%%%%%%%%%%%%%%%%%%%%%%%%%%%%%%%%%%%%%%%%%%%%%%%%%%%%%%%%%%%%%%%%%%%%%%%%%%%%%%%%%%%%%%%%%%%%%%%%%

%%%%%%%%%%%%%%%%%%%%%%%%%%%%%%%%%%%%%%%%%%%%%%%%%%%%%%%%%%%%%%%%%%%%%%%%%%%%%%%%%%%%%%%%%%%%%%%%%
%%%%%%%%%%%%%%%%%%%%%%%%%%          Introduction           %%%%%%%%%%%%%%%%%%%%%%%%%%%%%%%%%%%%%%
%%%%%%%%%%%%%%%%%%%%%%%%%%%%%%%%%%%%%%%%%%%%%%%%%%%%%%%%%%%%%%%%%%%%%%%%%%%%%%%%%%%%%%%%%%%%%%%%%
\section{Introduction}\label{sec:intro}
Mathematical models are used throughout engineering design, analysis, and control.
Often though, significant effort has to be expended to find a compromise between accuracy and computational complexity.
Model reduction methods seek to systematically develop low-dimensional surrogate models
so that the surrogates are much faster to simulate, yet retain important characteristics of the
high-dimensional models.

For control applications, balanced truncation (BT) model reduction \cite{Mullis1976,Moore1981} is popular since it is based on
controllability and observability.
Balanced truncation has been widely studied for linear time-invariant (LTI) systems, for which many variations and formulations of BT exist \cite{Gugercin2004}.
Its success for LTI systems has stimulated much interest in extending BT to nonlinear systems.
While Scherpen provided the theoretical extensions to nonlinear control-affine systems \cite{Scherpen1993,Scherpen1994a,Scherpen1994,Scherpen1996}, \emph{scalable} computational methods to implement nonlinear BT
have remained an open area of research.
Consequently, nonlinear BT has yet to be
demonstrated
on any high-dimensional systems, i.e. those requiring model reduction.

The two main challenges in nonlinear BT are 1) solving the Hamilton-Jacobi-Bellman (HJB) partial differential equations (PDEs) for the controllability and observability-type nonlinear \emph{energy functions}, and 2) computing a nonlinear coordinate transformation that ``balances'' these energy functions.
Fujimoto \& Tsubakino \cite{Fujimoto2008a} and Krener \cite{Krener2008} independently showed that a Taylor series approach can form the foundation for a computational framework tackling these two challenges.
However, neither of these works addressed the scalability issues that arise or provided a numerical solution, and as a result the approach is only illustrated on low-dimensional academic examples with $n=4$ and $n=6$ degrees of freedom, respectively.

Other numerical methods for solving HJB PDEs include traditional grid-based discretization techniques \cite{Falcone2016}, policy/value iteration approaches (including Galerkin approximations \cite{Beard1997,Kalise2018,Dolgov2021}), and other iterative methods such as sum-of-squares optimization \cite{Parrilo2000} and machine learning \cite{Borovykh2022}.
Due to the curse of dimensionality, many of these approaches do not scale well to dimensions above about $n=10$.
Efforts have been made to scale into the 100s of dimensions \cite{Dolgov2021}, but ensuring convergence to the correct solution is typically a significant challenge.
Some of these methods---when they converge---can solve for viscosity solutions to HJB PDEs, whereas the Taylor series approach only works for sufficiently smooth systems.
Fortunately, nonlinear BT theory is only defined for sufficiently smooth systems.
Among all existing methods for solving HJB PDEs for the energy functions, only the Taylor series approach has been shown to be suitable for subsequently computing nonlinear balancing transformations \cite{Fujimoto2008a,Krener2008}.

Another approach involves avoiding solving the full HJB PDE altogether.
One notable example is the state-dependent Riccati equation (SDRE) method~\cite{Cimen2012}, which
involves factoring the dynamics into a ``linear-like'' structure with state-dependent system matrices.
The Riccati equation---now also state-dependent---can then be solved in lieu of the HJB PDE.
In practice, the SDRE is also often solved using Taylor expansions, so this process amounts to discarding certain terms in the HJB PDE.
Finally, nonlinear BT has been treated with the algebraic Gramian approach \cite{Condon2005,Gray2006,Benner2024}; however, this method produces quadratic energy functions which fail to capture many features of the inherently non-quadratic energy functions exhibited by nonlinear systems, so there is a concern that algebraic Gramians are too conservative.
In this work, we seek to expand the capabilities of solving the full HJB PDE using the Taylor series approach \emph{without} these simplifications.

The Taylor series approach to solving HJB PDEs,
referred to as Al'brekht's method, dates back the origins of modern control theory in the 1960s \cite{Albrekht1961,Lukes1969}.
Despite frequent use in the literature
\cite{Garrard1972,Scherpen1994a,Fujimoto2010}, Al'brekht's method has traditionally only been applied to models with a few degrees of freedom or limited nonlinearities.
The primary deterrent has been the computational complexity of the approach:
without efficient implementation and solvers, the method scales very poorly beyond a few degrees of freedom.
Additionally,
besides Krener's Nonlinear Systems Toolbox (NST) \cite{Krener2019}, there has been a lack of general-purpose software for using Al'brekht's method, and unfortunately symbolic computations used in NST hinder its scalability.
The last few years have seen renewed interest in Al'brekht method \cite{Breiten2018,Almubarak2019,Borggaard2020,Borggaard2021},
in part due to the introduction of novel high-performance solvers
adapted to the tensor structure of the equations arising in Al'brekht's method \cite{Chen2019,Borggaard2021}.
One of these recent works includes Kramer et al.~\cite{Kramer2024}, which computes nonlinear BT energy functions for systems with state dimension as large as $n=1024$.
However, that work assumes very limited nonlinearity in the form of quadratic drift, linear inputs, and linear outputs.

A scalable computational approach for \emph{general} high-dimensional polynomial control-affine systems has remained an open problem due to the difficulty of forming and solving the tensor equations for general polynomial systems.
The present paper provides a solution to this problem.

This article contains two main contributions.
The first is a scalable Kronecker product-based approach to computing energy function approximations for systems with general polynomial structure in the drift, input, and measurements.
To that end, we derive the explicit equations for the energy function coefficients.
Second, we provide rigorous theoretical analyses regarding solvability and computational complexity, along with numerics demonstrating the scalability of the approach.
Open-access software implementations for the proposed algorithms are available in the \texttt{cnick1/NLBalancing} repository \cite{NLBalancing2023} under the \texttt{v1.0.0} tag, along with all of the numerical examples.

This paper is structured as follows.
\Cref{sec:background} reviews preliminary notation and definitions.
The proposed algorithm for computing $\cH_\infty$ energy function approximations is presented in \cref{sec:NLBT-Poly}, along with solvability and scalability analyses.
Numerical results are presented in \cref{sec:results} and \cref{sec:fem} to demonstrate the accuracy, convergence, and scalability of the proposed method.
Finally, \cref{sec:conclusion} gives a summary and future directions for the work.

%%%%%%%%%%%%%%%%%%%%%%%%%%%%%%%%%%%%%%%%%%%%%%%%%%%%%%%%%%%%%%%%%%%%%%%%%%%%%%%%%%%%%%%%%%%%%%%%%
%%%%%%%%%%%%%%%%%%%%%%%%%%           Background            %%%%%%%%%%%%%%%%%%%%%%%%%%%%%%%%%%%%%%
%%%%%%%%%%%%%%%%%%%%%%%%%%%%%%%%%%%%%%%%%%%%%%%%%%%%%%%%%%%%%%%%%%%%%%%%%%%%%%%%%%%%%%%%%%%%%%%%%
\section{Preliminaries and Background}\label{sec:background}
In \cref{sec:notation}, basic notation and definitions relating to Kronecker product polynomial expansions are reviewed.
Afterwards, we review the definitions for the $\cH_\infty$ nonlinear~BT energy functions in \cref{sec:NLBT-background}.

\subsection{Notation and Kronecker Product Identities}\label{sec:notation}
The Kronecker product of two matrices $\bA \in \real^{p \times q}$ and $\bB \in \real^{s \times t}$ is the $ps \times qt$ block matrix
\begin{align*}
    \bA \otimes \bB \coloneqq \begin{bmatrix} a_{11}\bB & \cdots & a_{1q}\bB \\
                \vdots    & \ddots & \vdots    \\
                a_{p1}\bB & \cdots & a_{pq}\bB
                              \end{bmatrix},
\end{align*}
where $a_{ij}$ denotes the $(i,j)$th entry of $\bA$.
Repeated Kronecker products are written as $\kronF{\bx}{k} \coloneqq \underbrace{\bx \otimes \dots \otimes \bx}_{k \ \text{times}} \in \real^{n^k}.$
The $\Vec{[\cdot]}$ operator stacks the columns of a matrix into one tall column vector, and the \textit{perfect shuffle matrix} \cite{VanLoan2000} is defined as the permutation matrix which shuffles $\Vec[\bA]$ to match $\Vec[\bA^\top]$:
\begin{equation}
    \Vec[\bA^\top]=\perm{q}{p} \Vec[\bA].
\end{equation}
For $\bA \in \real^{p \times q}$, the
\textit{$k$-way Lyapunov matrix} is defined as
\small
\begin{equation}
    \label{eq:kWayLyapunov}
    \cL_k(\bA) \coloneqq \sum_{i=1}^k\underbrace{\bI_p \otimes \bA \otimes \bI_p \otimes \dots \otimes \bI_p}_{\text{$k$ factors, $\bA$ in the $i$th position}} \in \real^{p^k \times p^{k-1}q}.
\end{equation}
\normalsize

\cref{tab:dimensions} provides a collection of Kronecker product identities compiled from various sources \cite{Brewer1978,VanLoan2000,Henderson1981,Magnus2019}.
\begin{table}[htb]
    \caption{Relevant Kronecker product identities. }
    \-\hspace{-2pt}\makebox[\columnwidth]{\rule{.95\columnwidth}{0.1em}}
    \vspace{-7pt}

    \-\hspace{-2pt}\makebox[\columnwidth]{\rule{.95\columnwidth}{0.1em}}

    \begin{identities}
        \setlength{\itemsep}{3pt}
        \setlength\itemindent{14pt}
        \item \label{ID:T2.4} $\quad (\bA \otimes \bB)(\bD \otimes \bG) = \bA \bD \otimes \bB \bG $
        \item \label{ID:T2.5} $\quad \bA \otimes \bB = \perm{s}{p} (\bB \otimes \bA)\perm{q}{t}$
        \item \label{ID:T2.17} $\quad (\bI_p \otimes \bx)\bA = \bA \otimes \bx$
        % \item \label{ID:vecKidentity} $\quad (\bx\otimes \bI_p)\bA= (\bx\otimes \bA)$
        \item \label{ID:T2.13} $\quad \Vec[\bA \bD \bB] = (\bB^\top \otimes \bA) \Vec[\bD]$
        \item \label{ID:T3.4} $\quad \begin{aligned}[t]
                \Vec[\bA \bD] & = (\bI_s \otimes \bA) \Vec[\bD] \\
                %   & = (\bD^\top \otimes \bI_p) \Vec[\bA] \\
                %   & = (\bD^\top \otimes \bA) \Vec[\bI_q] \\
            \end{aligned}$
        \item \label{ID:nonsymmetric-quadraticForm} $\quad \bu^\top \bB \bx  = \Vec[\bB]^\top (\bx \otimes \bu) $
        \item \label{ID:vecKron1} $\quad \Vec\left[\bx^\top \otimes \bI_m\right] = (\bx \otimes \Vec[\bI_m])$
        \item \label{ID:vecKron3} $\quad \Vec\left[\bA \otimes \bB\right] =  \left(\bI_q \otimes \perm{p}{t} \otimes \bI_s \right)\left(\Vec[\bA] \otimes \Vec[\bB] \right)$
    \end{identities}

    \vspace{5pt}

    \-\hspace{9pt}\textit{Dimensions of matrices used in the Kronecker product identities}\\

    \vspace{-2pt}
    \centering
    \begin{tabular}{c}
        $\bA(p \times q)$ \,\,
        $\bD(q \times s)$ \,\,
        $\bB(s \times t)$ \,\,
        $\bG(t \times u)$ \,\,
        $\bu(s \times 1)$ \,\,
        $\bx(t \times 1)$
    \end{tabular}

    \vspace{2pt}

    \-\hspace{-2pt}\makebox[\columnwidth]{\rule{.95\columnwidth}{0.1em}}
    \label{tab:dimensions}
\end{table}

\noindent A concept which arises when dealing with polynomials in Kronecker product form is symmetry of the coefficients (a generalization of symmetry of a matrix), which is defined next.
\begin{definition}[Symmetric Coefficients\label{def:sym}] Given a monomial of the form $\bw_d^\top \kronF{\bx}{d}$, the coefficient $\bw_k \in \real^{n^k \times 1}$ is \emph{symmetric} if for all $\ba_i \in \real^n$ it satisfies
    \begin{displaymath}
        \bw_k^\top \left(\ba_1 \otimes \ba_2 \otimes \cdots \otimes \ba_k\right) = \bw_k^\top \left(\ba_{i_1} \otimes \ba_{i_2} \otimes \cdots \otimes \ba_{i_k}\right),
    \end{displaymath}
    where the indices $\{ i_j \}_{j=1}^k$ are any permutation of $\{1, \dots, k \}$.
\end{definition}

A symmetric coefficient is thus invariant under certain permutations; this can also be represented in terms of the perfect shuffle matrix.
\begin{proposition}[Permutation of symmetric coefficients]\label{thm:symPermutation}
    If a coefficient $\bw_k \in \real^{n^k \times 1}$ is symmetric as per \cref{def:sym}, then
    \begin{align*}
        \bw_k & = \perm{n^j}{n^i} \bw_k \qquad \forall i,j\geq0 \quad \text{s.t.}\quad  i+j=k.
    \end{align*}
\end{proposition}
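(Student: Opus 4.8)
The plan is to translate the asserted matrix identity into a statement about the bilinear form $\bz \mapsto \bw_k^\top \bz$ and then to invoke the permutation invariance of \cref{def:sym} on a spanning set of rank-one tensors.

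First I would record two standard facts about the perfect shuffle matrix. Because $\perm{q}{p}$ is a permutation matrix it is orthogonal, and applying $\Vec[\bA^\top] = \perm{q}{p}\Vec[\bA]$ twice gives $\perm{p}{q}\perm{q}{p} = \bI$, so $\perm{q}{p}^\top = \perm{p}{q}$. Next I would specialize \cref{ID:T2.5} to column vectors: for $\ba \in \real^p$ and $\bb \in \real^s$ (so $q=t=1$ and the trailing shuffle is the $1\times1$ identity) it reads $\perm{s}{p}(\bb \otimes \ba) = \ba \otimes \bb$. Since this holds for arbitrary vectors, I may apply it with the \emph{composite} vectors $\bb = \ba_1 \otimes \cdots \otimes \ba_i \in \real^{n^i}$ and $\ba = \ba_{i+1} \otimes \cdots \otimes \ba_k \in \real^{n^j}$ (where $i+j=k$), obtaining
\[
    \perm{n^i}{n^j}\bigl(\ba_1 \otimes \cdots \otimes \ba_k\bigr) = \ba_{i+1} \otimes \cdots \otimes \ba_k \otimes \ba_1 \otimes \cdots \otimes \ba_i,
\]
i.e. $\perm{n^i}{n^j}$ acts on a pure tensor by cyclically shifting its factors.

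Now fix $i,j \geq 0$ with $i+j=k$; the cases $i=0$ or $j=0$ are trivial since the corresponding shuffle matrix is the identity. To prove $\bw_k = \perm{n^j}{n^i}\bw_k$ it is equivalent, taking transposes and using $\perm{n^j}{n^i}^\top = \perm{n^i}{n^j}$, to show $\bw_k^\top \bz = \bw_k^\top \perm{n^i}{n^j}\bz$ for all $\bz \in \real^{n^k}$. The tensors $\ba_1 \otimes \cdots \otimes \ba_k$ with each $\ba_\ell \in \real^n$ span $\real^{n^k}$ (run the $\ba_\ell$ over the standard basis $\{\be_m\}$), so it suffices to check the equality for $\bz = \ba_1 \otimes \cdots \otimes \ba_k$. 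By the displayed identity $\perm{n^i}{n^j}\bz$ is a cyclic reindexing of $\bz$, which is one of the permutations permitted in \cref{def:sym}; hence symmetry of $\bw_k$ yields $\bw_k^\top \bz = \bw_k^\top \perm{n^i}{n^j}\bz$, and the proof is complete.

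The only delicate point — everything else is bookkeeping — is keeping the orientation of the shuffle matrix straight: one must verify that it is $\perm{n^j}{n^i}$ (and not $\perm{n^i}{n^j}$) whose transpose implements the shift of the first $i$ tensor factors past the remaining $j$, so that \cref{def:sym} applies verbatim. I would double-check this against \cref{ID:T2.5} with the dimension conventions in \cref{tab:dimensions}, exactly as in the computation above.
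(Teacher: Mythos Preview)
Your argument is correct. The paper actually states \cref{thm:symPermutation} without proof, so there is nothing to compare against; your proposal supplies exactly the natural justification: reduce to pure tensors via the spanning property, use \cref{ID:T2.5} specialized to column vectors to identify the action of $\perm{n^i}{n^j}$ on $\ba_1\otimes\cdots\otimes\ba_k$ as a cyclic shift of the factors, and then invoke \cref{def:sym}. The bookkeeping on the shuffle orientation ($\perm{n^j}{n^i}^\top=\perm{n^i}{n^j}$ and the dimension assignments $s=n^i$, $p=n^j$) is handled correctly.
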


\subsection{Energy Functions for \texorpdfstring{$\cH_\infty$}{H-infinity} Nonlinear Balancing}\label{sec:NLBT-background}
Consider
the control-affine dynamical system
\begin{align}\label{eq:FOM-NL}
    \dot{\bx}(t) & = \bf(\bx(t))  + \bg(\bx(t)) \bu(t), &
    \by(t)       & = \bh(\bx(t)),
\end{align}
with $m$ inputs, $p$ outputs, and state dimension $n$.
The $\cH_\infty$ nonlinear balancing framework \cite{Scherpen1996} defines a pair of energy functions that generalize the concepts of controllability and observability to (potentially unstable) systems of the form \cref{eq:FOM-NL}.
These energy functions are then balanced using a nonlinear state-space transformation, and subsequent model reduction involves truncating states that are determined to be less important in the balanced representation.
However, computing these energy functions (defined next) is a significant challenge, which we address in this work.

\begin{definition}\cite[Def. 5.1]{Scherpen1996}
    Let $\gamma$ be a positive constant $\gamma > 0, \gamma \neq 1$, and define $\eta \coloneqq 1-\gamma^{-2}$.
    The $\cH_\infty$ past energy of the nonlinear system \cref{eq:FOM-NL} is defined as
    \begin{equation} \label{eq:HinftyPastEnergyDef}
        \cE_\gamma^{-}(\bx_0)  \coloneqq \!\!\!\! \min_{\substack{\bu \in L_{2}(-\infty, 0] \\ \bx(-\infty) = \bzero,\,  \bx(0) = \bx_0}} \! \frac{1}{2} \int\displaylimits_{-\infty}^{0} \eta \Vert \by(t) \Vert^2  +  \Vert \bu(t) \Vert^2 {\rm{d}}t.
    \end{equation}
    The $\cH_\infty$ future energy
    is defined as
    \begin{equation} \label{eq:HinftyFutureEnergyDef-a}
        \cE_\gamma^{+}(\bx_0)  \coloneqq  \underset{\substack{\bu \in L_{2}[0,\infty) \\ \bx(0) = \bx_0 , \, \bx(\infty) = \bzero}}{{\min}/{\max}} \! \frac{1}{2} \int\displaylimits_{0}^{\infty} \Vert \by(t) \Vert^2  +
        \frac{\Vert \bu(t) \Vert^2}{\eta} {\rm{d}}t,
    \end{equation}
    where the minimum is taken for $\gamma>1$ and the maximum is taken for $\gamma<1$.
\end{definition}

The energy functions, which are nominally defined by optimization problems, can be computed as the solutions to HJB PDEs \cite[Thm. 5.2]{Scherpen1996}.
Assume that the HJB equation
\begin{equation} \label{eq:Hinfty-Past-HJB}
    \begin{split}
        0 & =  \frac{\partial \cE_\gamma^{-}(\bx)}{\partial \bx} \bf(\bx) + \frac{1}{2}  \frac{\partial \cE_\gamma^{-}(\bx)}{\partial \bx} \bg(\bx) \bg(\bx)^\top \frac{\partial^\top \cE_\gamma^{-}(\bx)}{\partial \bx} \\
          & \qquad - \frac{\eta}{2}  \bh(\bx)^\top  \bh(\bx)
    \end{split}
\end{equation}
has a solution with $\cE_\gamma^{-}(\bzero) = 0$ such that the quantity
$ - \bf(\bx) - \bg(\bx) \bg(\bx)^\top \partial^\top \cE_\gamma^{-}(\bx)/\partial \bx $
is asymptotically stable.
Then this solution is the past energy function $\cE_\gamma^{-}(\bx)$ from~\cref{eq:HinftyPastEnergyDef}.
Furthermore, assume that the HJB equation
\begin{equation} \label{eq:Hinfty-Future-HJB}
    \begin{split}
        0 & =  \frac{\partial \cE_\gamma^{+}(\bx)}{\partial \bx} \bf(\bx)   - \frac{\eta}{2} \frac{\partial \cE_\gamma^{+}(\bx)}{\partial \bx} \bg(\bx) \bg(\bx)^\top \frac{\partial^\top \cE_\gamma^{+}(\bx)}{\partial \bx} \\
          & \qquad + \frac{1}{2}\bh(\bx)^\top \bh(\bx)
    \end{split}
\end{equation}
has a solution with $\cE_\gamma^{+}(\bzero) = 0$ such that the quantity
$\left.\bf(\bx) - \eta \bg(\bx) \bg(\bx)^\top \partial^\top \cE_\gamma^{+}(\bx)/\partial \bx\right.$
is asymptotically stable.
This solution is the future energy function $\cE_\gamma^{+}(\bx)$ from~\cref{eq:HinftyFutureEnergyDef-a}.

\begin{remark}
    We adopt the $\cH_\infty$ balancing framework because it generalizes the open-loop \cite{Scherpen1993} and closed-loop HJB \cite{Scherpen1994} balancing theories.
    Under appropriate assumptions about existence and smoothness of the energy functions,
    the closed-loop HJB past and future energy functions are recovered
    in the limit as the gain parameter $\gamma$ goes to infinity (i.e. $\eta$ goes to one),
    whereas
    the open-loop nonlinear controllability and observability energy functions are recovered
    as the parameter $\gamma$ goes to one (i.e. $\eta$ goes to zero)
    \cite[Thm.~5.5~\&~5.7]{Scherpen1996}.
\end{remark}

\section{Computing \texorpdfstring{$\cH_\infty$}{H-infinity} Energy Functions for Polynomial Control-affine Systems} \label{sec:NLBT-Poly}
Computing solutions to the HJB equations \cref{eq:Hinfty-Past-HJB,eq:Hinfty-Future-HJB} in general is very challenging.
However, if $\bf(\bx)$, $\bg(\bx)$, and $\bh(\bx)$ are analytic, the solutions to \cref{eq:Hinfty-Past-HJB,eq:Hinfty-Future-HJB} are known to be analytic as well \cite{Lukes1969}.
Al'brekht showed that in this case, it is possible to compute the Taylor expansion of the energy functions $\cE_\gamma^{-}(\bx)$ and $\cE_\gamma^{+}(\bx)$ based on the Taylor expansions of $\bf(\bx)$, $\bg(\bx)$, and $\bh(\bx)$.
Thus, for the rest of this paper, we will consider a nonlinear control-affine dynamical system with polynomial structure
\begin{equation}\label{eq:FOM-Poly}
    \begin{split}
        \dot{\bx} & = \underbrace{\bA \bx + \sum_{p=2}^\ell \bF_p \kronF{\bx}{p}}_{\bf(\bx)} + \underbrace{\left(\sum_{p=1}^\ell \G_p \left(\kronF{\bx}{p} \otimes \bI_m\right) + \bB\right)}_{\bg(\bx)} \bu, \\
        \by       & = \underbrace{\bC \bx + \sum_{p=2}^\ell \Hx_p \kronF{\bx}{p}}_{\bh(\bx)},
    \end{split}
\end{equation}
where $\bA \in \real^{n\times n}$, $\bF_p \in \real^{n\times n^p}$, $\bB \in \real^{n\times m}$, $\G_p \in \real^{n \times mn^p}$, $\bC \in \real^{p\times n}$, and $\Hx_p \in \real^{p\times n^p}$.
We emphasize that many common nonlinear dynamical systems can be put in polynomial form \cref{eq:FOM-Poly}.
One can consider \cref{eq:FOM-Poly} as simply a Taylor approximation to the control-affine system \cref{eq:FOM-NL}.
Furthermore, many common PDEs, including Navier-Stokes, Kuramoto-Sivashinsky, Burgers, Allen-Cahn, Korteweg-de Vries, and Fokker-Planck all feature polynomial nonlinearities;
upon spatial discretization, these all yield systems of the form \cref{eq:FOM-Poly}.

\subsection{Main Results: Energy Function Approximations for Polynomial Systems}\label{sec:mainResult}
Since the energy function solutions to \cref{eq:Hinfty-Past-HJB,eq:Hinfty-Future-HJB} are analytic, they
can be approximated as $d$th-order polynomials
\begin{align}
    \label{eq:vi-coeffs}
     & \cE_\gamma^-(\bx)
    \approx \frac{1}{2} \sum_{i=2}^d \bv_i^\top \kronF{\bx}{i},
     &
     & \cE_\gamma^+(\bx)
    \approx \frac{1}{2} \sum_{i=2}^d \bw_i^\top \kronF{\bx}{i},
\end{align}
with coefficients $\bv_i, \bw_i \in \real^{n^i}$.
Note that
the first term in the sum can be written $\bv_2^\top \kronF{\bx}{2} = \bx^\top \bV_2 \bx$, and without loss of generality we can assume $\bV_2$ and $\bW_2$ are symmetric.
The next two theorems give the explicit equations to compute the polynomial coefficients $\bv_i$ and $\bw_i$.
\begin{theorem}[Past energy polynomial coefficients]\label{thm:viPoly}
    Let $\gamma > \gamma_0 \geq 0$ and $\eta = 1-\gamma^{-2}$, where $\gamma_0$ denotes the smallest $\tilde{\gamma}$ such that a stabilizing controller exists for which the $\mathcal{H}_\infty$ norm of the closed-loop system is less than $\tilde{\gamma}$.
    Let the past energy function $\cE_\gamma^{-}(\bx)$, which solves the $\cH_\infty$ HJB PDE \cref{eq:Hinfty-Past-HJB} for the polynomial system \cref{eq:FOM-Poly}, be of the form~\cref{eq:vi-coeffs} with the coefficients $\bv_i \in \real^{n^i}$ for $i=2,3,\dots,d$.
    Then $\bv_2 = \Vec\left[\bV_2\right]$, where $\bV_2$ is the symmetric positive semidefinite solution to the $\cH_\infty$ algebraic Riccati equation (ARE)
    \begin{align}\label{eq:V2-2}
        \bzero & = \bA^\top \bV_2 + \bV_2 \bA- \eta\bC^\top \bC + \bV_2 \bB \bB^\top \bV_2.
    \end{align}
    For $3 \leq k \leq d$, let $\mathbf{\tv}_k \in \real^{n^k}$ solve the linear system
    \small
    \begin{equation}\label{eq:LinSysForVk-Poly}
        \begin{split}
             & \cL_{k} \left(\bA + \bB \bB^\top  \bV_2 \right)^\top \mathbf{\tv}_k
            =
            -\sum_{\mathclap{\substack{i,p\geq 2                                                                                            \\       i + p = k+1}}}
            \cL_i(\bF_p)^\top \bv_i
            \\
             & \hspace{.4cm}
            - \frac{1}{4}\sum_{\mathclap{\substack{i,j>2                                                                                    \\ i+j=k+2}}} ij~\Vec[\bV_i^\top \bB \bB^\top \bV_j]
            + \eta\sum_{\mathclap{\substack{p,q\geq 1                                                                                       \\ p + q = k}}} \Vec[\Hx_p^\top \Hx_q] \\
             & \hspace{.4cm}- \frac{1}{4} \sum_{o=1}^{2\ell}\left( \sum_{\substack{p,q \geq 0                                               \\p+q=o}} \left(  \sum_{\substack{i,j\geq 2 \\       i+j=k-o+2}} \!\!\!\!\!\!ij~\Vec \Biggl[ \left(\bI_{n^p} \otimes \Vec\left[ \bI_m\right]^\top\right) \times \right.\right.\\
             & \hspace{1cm} \left(\Vec\left[\G_q^\top \bV_j \right]^\top \otimes \left(\G_p^\top \bV_i \otimes \bI_m \right) \right) \times \\
             & \hspace{1cm} \left.\left. \left(\bI_{n^{j-1}} \otimes \perm{n^{i-1}}{n^q m} \otimes \bI_m \right)
                    \left( \bI_{n^{k-p}} \otimes \Vec\left[ \bI_m\right] \right)\Biggr]
            \vphantom{\sum_{\substack{p,q \geq 0                                                                                            \\p+q=o}}} \right)\right),
        \end{split}
    \end{equation}
    \normalsize
    where
    $\Hx_1 \coloneqq \bC$ and
    $\G_0 \coloneqq \bB$.
    Then the coefficient $\bv_k \in \real^{n^k}$ for $3 \leq k \leq d$ is obtained by symmetrization of $\mathbf{\tv}_k$.
\end{theorem}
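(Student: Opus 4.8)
The plan is to carry out Al'brekht's power-series method: substitute the polynomial ansatz \cref{eq:vi-coeffs} for $\cE_\gamma^-$, together with the polynomial data of \cref{eq:FOM-Poly}, into the past-energy HJB equation \cref{eq:Hinfty-Past-HJB}, expand every product into Kronecker form using the identities of \cref{tab:dimensions}, collect terms of equal homogeneous degree in $\bx$, and identify the resulting coefficient equations degree by degree. (Existence of an analytic solution of the form \cref{eq:vi-coeffs} is part of the hypothesis, so it suffices to pin down its coefficients.) Two elementary facts do the heavy lifting. First, for a symmetric coefficient $\bv\in\real^{n^i}$ one has $\tfrac{\partial}{\partial\bx}\bigl(\bv^\top\kronF{\bx}{i}\bigr) = i\,\bv^\top\bigl(\kronF{\bx}{i-1}\otimes\bI_n\bigr)$ --- collapsing the $i$ Jacobian slots via \cref{thm:symPermutation} --- and consequently $\tfrac{\partial}{\partial\bx}\bigl(\bv^\top\kronF{\bx}{i}\bigr)\,\bM\,\kronF{\bx}{p} = \bv^\top\cL_i(\bM)\,\kronF{\bx}{i+p-1}$ for any $\bM\in\real^{n\times n^p}$; this is precisely how the $k$-way Lyapunov matrices enter. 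Second, scalar quadratic forms in $\kronF{\bx}{\cdot}$ are rewritten as $\Vec[\cdot]^\top\kronF{\bx}{\cdot}$ using \cref{ID:nonsymmetric-quadraticForm,ID:T2.13}.

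Collecting degree two and symmetrizing the quadratic form gives, with $\bv_2=\Vec[\bV_2]$, exactly the $\cH_\infty$ ARE \cref{eq:V2-2}: the linear part of $\partial_\bx\cE_\gamma^-\,\bf$ contributes $\bA^\top\bV_2+\bV_2\bA$, the leading part of $\tfrac12\partial_\bx\cE_\gamma^-\,\bg\bg^\top\partial_\bx^\top\cE_\gamma^-$ contributes $\bV_2\bB\bB^\top\bV_2$, and $-\tfrac{\eta}{2}\bh^\top\bh$ contributes $-\eta\bC^\top\bC$. For $3\le k\le d$ I would then collect degree $k$. The terms linear in the unknown $\bv_k$ come from $\partial_\bx(\tfrac12\bv_k^\top\kronF{\bx}{k})\,\bA\bx$ and the two $(\bv_2,\bv_k)$ cross terms of $\tfrac12\partial_\bx\cE_\gamma^-\,\bB\bB^\top\partial_\bx^\top\cE_\gamma^-$; by the facts above and linearity of $\cL_k$ these assemble into $\tfrac12\bv_k^\top\cL_k(\bA+\bB\bB^\top\bV_2)\,\kronF{\bx}{k}$. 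Everything else involves only $\bv_2,\dots,\bv_{k-1}$ and the fixed data, and sorts into four groups: the drift nonlinearity $\partial_\bx\cE_\gamma^-\,\bF_p\kronF{\bx}{p}$ with $i+p=k+1$, giving the $\cL_i(\bF_p)^\top\bv_i$ terms; the $(\bv_i,\bv_j)$ cross terms with $i,j>2$, $i+j=k+2$, of the $\bB\bB^\top$ quadratic, which become the $\Vec[\bV_i^\top\bB\bB^\top\bV_j]$ terms after \cref{ID:T2.17,ID:T2.13}; the output term $-\tfrac{\eta}{2}\bh^\top\bh$ with $\Hx_1\coloneqq\bC$ and $p+q=k$, giving the $\eta\,\Vec[\Hx_p^\top\Hx_q]$ terms; and all terms of $\tfrac12\partial_\bx\cE_\gamma^-\,\bg\bg^\top\partial_\bx^\top\cE_\gamma^-$ containing at least one factor $\G_p(\kronF{\bx}{p}\otimes\bI_m)$ (with $\G_0\coloneqq\bB$), giving the nested sum indexed by $o=p+q$ and the gradient orders $i,j$ with $i+j+o-2=k$. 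Setting the degree-$k$ polynomial to zero and cancelling the common factor yields \cref{eq:LinSysForVk-Poly}.

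The main obstacle is this last group: transcribing a single term $\bv_i^\top(\kronF{\bx}{i-1}\otimes\bI_n)\,\G_p\bigl(\kronF{\bx}{p}(\kronF{\bx}{q})^\top\otimes\bI_m\bigr)\G_q^\top(\kronF{\bx}{j-1}\otimes\bI_n)^\top\bv_j$ into the stated vectorized form requires carefully tracking the two $m$-dimensional input slots and the several state slots of differing orders, discharged by repeated use of the mixed-product rule \cref{ID:T2.4}, the commutation rule \cref{ID:T2.17}, the vec identities \cref{ID:T2.13,ID:T3.4}, and the perfect-shuffle rule \cref{ID:T2.5}; this bookkeeping is what generates the factors $\bI_{n^p}\otimes\Vec[\bI_m]^\top$, the shuffles $\perm{n^{i-1}}{n^q m}$, and the trailing $\bI_{n^{k-p}}\otimes\Vec[\bI_m]$. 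Two closing points complete the argument. Solvability: the hypothesis $\gamma>\gamma_0$ guarantees, by standard $\cH_\infty$ theory (cf.~\cite{Scherpen1996}), a symmetric positive semidefinite $\bV_2$ solving \cref{eq:V2-2} for which $\bA+\bB\bB^\top\bV_2$ has no purely imaginary eigenvalue; since the spectrum of $\cL_k(\bM)$ consists of the $k$-fold sums of eigenvalues of $\bM$, the operator $\cL_k(\bA+\bB\bB^\top\bV_2)$ is nonsingular for every $k\ge2$, so $\tv_k$ exists and is unique. Symmetrization: because $\cL_k(\bM)^\top=\sum_i \bI_n\otimes\cdots\otimes\bM^\top\otimes\cdots\otimes\bI_n$ is invariant under permutation of its $k$ tensor factors, it commutes with the symmetrization operator, so it preserves the symmetric subspace and its complement; hence the symmetrization of $\tv_k$ is the unique symmetric solution of the symmetrized version of \cref{eq:LinSysForVk-Poly}, which is the genuine degree-$k$ Taylor coefficient of $\cE_\gamma^-$, and since $\kronF{\bx}{k}$ is symmetric only this symmetric part enters the ansatz and the subsequent steps. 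An induction on $k$ then finishes the proof.
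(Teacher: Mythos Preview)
Your plan is essentially the paper's own proof: substitute the polynomial data and the ansatz \cref{eq:vi-coeffs} into \cref{eq:Hinfty-Past-HJB}, use symmetry of the lower-order $\bv_i$ to collapse the gradient, collect degree~2 to get the ARE and degree~$k$ to get a linear system whose coefficient matrix is $\cL_k(\bA+\bB\bB^\top\bV_2)^\top$, and sort the remaining degree-$k$ contributions into the $\bF_p$, $\bB\bB^\top$, $\Hx_p$, and $\G_p$ buckets. The paper structures this through \cref{thm:albrekht,thm:albrekht-2,thm:LHS,thm:G-RHS,thm:Past-energy-deriv-compact} and then spends most of its effort on the step you flag as ``the main obstacle'': the chain \cref{eq:vi-G-terms-1}--\cref{eq:G-terms} that turns a single $\G_p/\G_q$ cross term into the vectorized expression appearing in \cref{eq:LinSysForVk-Poly}, via \cref{ID:T2.4,ID:T2.13,ID:T3.4,ID:T2.17,ID:nonsymmetric-quadraticForm,ID:vecKron1,ID:vecKron3}. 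Your outline names the right identities but does not carry this out; be aware that this bookkeeping is where the specific perfect-shuffle factor $\perm{n^{i-1}}{n^q m}$ and the $\Vec[\bI_m]$ padding actually arise, and getting the index placements to match \cref{eq:LinSysForVk-Poly} exactly is nontrivial.

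One genuine slip in your closing remarks: the claim that ``$\bA+\bB\bB^\top\bV_2$ has no purely imaginary eigenvalue'' is \emph{not} enough to conclude $\cL_k(\bA+\bB\bB^\top\bV_2)$ is nonsingular. The spectrum of $\cL_k(\bM)$ is indeed the set of $k$-fold sums of eigenvalues of $\bM$, but ``no eigenvalue on $i\mathbb{R}$'' does not prevent such a sum from vanishing (e.g.\ $\bM$ with eigenvalues $2$ and $-1$ gives $2+(-1)+(-1)=0$ for $k=3$). What is needed---and what the $\cH_\infty$ assumptions actually deliver (cf.\ \cite[Thm.~5.2]{Scherpen1996})---is that $-(\bA+\bB\bB^\top\bV_2)$ is asymptotically stable, i.e.\ all eigenvalues of $\bA+\bB\bB^\top\bV_2$ lie in one open half-plane, so every $k$-fold sum is nonzero. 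This solvability statement is not part of \cref{thm:viPoly} itself (the paper isolates it as a separate theorem), but if you include it, use the correct hypothesis.
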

\begin{theorem}[Future energy polynomial coefficients]\label{thm:wiPoly}
    Let $\gamma > \gamma_0 \geq 0$ and $\eta = 1- \gamma^{-2}$ as in \cref{thm:viPoly}.
    Let the future energy function $\cE_\gamma^{+}(\bx)$, which solves the $\cH_\infty$ HJB PDE \cref{eq:Hinfty-Future-HJB} for the polynomial system \cref{eq:FOM-Poly}, be of the form~\cref{eq:vi-coeffs} with the coefficients $\bw_i \in \real^{n^i}$ for $i=2,3,\dots,d$.
    Then $\bw_2 = \Vec\left[\bW_2\right]$, where $\bW_2$ is the symmetric positive semidefinite solution to the $\cH_\infty$ ARE
    \begin{align}
        \bzero & = \bA^\top \bW_2 + \bW_2 \bA + \bC^\top \bC - \eta \bW_2 \bB \bB^\top \bW_2.\label{eq:W2-2}
    \end{align}
    For $3 \leq k \leq d$, let $\mathbf{\tw}_k \in \real^{n^k}$ solve the linear system
    \small
    \begin{equation}\label{eq:LinSysForWk-Poly}
        \begin{split}
             &
            \cL_{k} \left(\bA - \eta \bB \bB^\top  \bW_2 \right)^\top
            \mathbf{\tw}_k =
            - \sum_{\mathclap{\substack{i,p\geq 2                                                                                           \\ i + p = k+1}}} \cL_i(\bF_p)^\top \bw_i
            \\ & \hspace{.4cm}
            + \frac{\eta}{4}\sum_{\mathclap{\substack{i,j>2                                                                                 \\ i+j=k+2}}} ij~\Vec[\bW_i^\top \bB \bB^\top \bW_j]
            - \sum_{\mathclap{\substack{p,q\geq 1                                                                                           \\ p + q = k}}} \Vec[\Hx_p^\top \Hx_q] \\
             & \hspace{.4cm} + \frac{\eta}{4} \sum_{o=1}^{2\ell}\left( \sum_{\substack{p,q \geq 0                                           \\p+q=o}} \left(  \sum_{\substack{i,j\geq 2 \\       i+j=k-o+2}} \!\!\!\!\!\!ij~\Vec \Biggl[ \left(\bI_{n^p} \otimes \Vec\left[ \bI_m\right]^\top\right) \times \right.\right.\\
             & \hspace{1cm} \left(\Vec\left[\G_q^\top \bW_j \right]^\top \otimes \left(\G_p^\top \bW_i \otimes \bI_m \right) \right) \times \\
             & \hspace{1cm} \left.\left.\left(\bI_{n^{j-1}} \otimes \perm{n^{i-1}}{n^q m} \otimes \bI_m \right)
                    \left( \bI_{n^{k-p}} \otimes \Vec\left[ \bI_m\right] \right)\Biggr]
            \vphantom{\sum_{\substack{p,q \geq 0                                                                                            \\p+q=o}}} \right)\right).
        \end{split}
    \end{equation}
    \normalsize
    Then the coefficient $\bw_k \in \real^{n^k}$ for $3 \leq k \leq d$ is obtained by symmetrization of $\mathbf{\tw}_k$.
\end{theorem}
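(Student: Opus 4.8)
\emph{Proof strategy.} The plan is to establish \cref{thm:wiPoly} by substituting the polynomial ansatz \cref{eq:vi-coeffs} for $\cE_\gamma^+$, together with the polynomial data \cref{eq:FOM-Poly}, into the future-energy HJB equation \cref{eq:Hinfty-Future-HJB}, and then matching homogeneous terms of each degree $k$ in $\bx$; the whole argument runs parallel to the one for the past energy in \cref{thm:viPoly}, with the signs and scalar factors dictated by the $-\eta/2$ weighting of the quadratic-gradient term and the $+\tfrac12$ weighting of the output term in \cref{eq:Hinfty-Future-HJB}. The one computational fact used repeatedly is that pairing the gradient of a homogeneous piece $\tfrac12\bw_i^\top\kronF{\bx}{i}$ on the right with a monomial vector produces the $k$-way Lyapunov operator \cref{eq:kWayLyapunov} acting on the coefficient; for instance $\tfrac{\partial}{\partial\bx}\!\big(\tfrac12\bw_i^\top\kronF{\bx}{i}\big)\,\bF_p\kronF{\bx}{p}=\tfrac12\big(\cL_i(\bF_p)^\top\bw_i\big)^\top\kronF{\bx}{i+p-1}$, which already accounts for the $\cL_i(\bF_p)^\top\bw_i$ terms. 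Collecting the purely quadratic terms (in which only $\bA,\bB,\bC,\bW_2$ can appear, since every $\bF_p,\Hx_p$ with $p\ge2$ and every $\G_p$ with $p\ge1$ enters only at degree $\ge3$) reduces \cref{eq:Hinfty-Future-HJB} to the $\cH_\infty$ ARE \cref{eq:W2-2}; by standard $\cH_\infty$ Riccati theory, for $\gamma>\gamma_0$ this ARE has a unique symmetric positive semidefinite stabilizing solution $\bW_2$, and the closed-loop matrix $\bA-\eta\bB\bB^\top\bW_2$ is Hurwitz --- precisely the property built into the definition of $\gamma_0$.

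At degree $k\ge3$ I would split the degree-$k$ part of \cref{eq:Hinfty-Future-HJB} into the terms containing the still-unknown top coefficient $\bw_k$ and the remainder. The drift-linear term $\tfrac{\partial\cE_\gamma^+}{\partial\bx}\bA\bx$ contributes a multiple of $\cL_k(\bA)^\top\bw_k$, and the cross term between $\bw_2$ and $\bw_k$ in $-\tfrac{\eta}{2}\tfrac{\partial\cE_\gamma^+}{\partial\bx}\bg\bg^\top\tfrac{\partial^\top\cE_\gamma^+}{\partial\bx}$ arising from the constant block $\bB$ of $\bg$ contributes a multiple of $\cL_k(\bB\bB^\top\bW_2)^\top\bw_k$; combining these yields the left-hand side $\cL_k(\bA-\eta\bB\bB^\top\bW_2)^\top$ of \cref{eq:LinSysForWk-Poly} (up to the overall $\tfrac12$ that is cleared against the $\tfrac12$ carried by every remaining term). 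All remaining contributions have total coefficient-degree strictly less than $k$ and form the right-hand side: the drift nonlinearities give $-\sum\cL_i(\bF_p)^\top\bw_i$; the $\bB\bB^\top$ block of $\bg\bg^\top$ paired between two nonlinear gradient factors ($i,j>2$) gives $+\tfrac{\eta}{4}\sum ij\,\Vec[\bW_i^\top\bB\bB^\top\bW_j]$, using ID~\ref{ID:T2.13} to write the scalar in $\Vec$ form; and $\tfrac12\bh^\top\bh$ gives $-\sum\Vec[\Hx_p^\top\Hx_q]$ with $\Hx_1\coloneqq\bC$.

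The main obstacle, and the genuinely new piece relative to the quadratic-drift/linear-input setting, is the nested sum coming from the polynomial part $\sum_{p\ge1}\G_p(\kronF{\bx}{p}\otimes\bI_m)$ of $\bg(\bx)$ inside $\bg(\bx)\bg(\bx)^\top$. Here I would write $\bg(\bx)\bg(\bx)^\top=\sum_{p,q\ge0}\G_p\big(\kronF{\bx}{p}(\kronF{\bx}{q})^\top\otimes\bI_m\big)\G_q^\top$ (using ID~\ref{ID:T2.4} on the $\otimes\bI_m$ blocks, with $\G_0\coloneqq\bB$), sandwich it between the degree-$i$ left gradient $\tfrac{i}{2}\bw_i^\top(\kronF{\bx}{i-1}\otimes\bI_n)$ and the transposed degree-$j$ right gradient, and then repeatedly apply the $\Vec$ identities of \cref{tab:dimensions} --- principally ID~\ref{ID:T2.13}, ID~\ref{ID:T3.4}, ID~\ref{ID:T2.5}, ID~\ref{ID:T2.17}, and ID~\ref{ID:vecKron1} --- to (a) pull every copy of $\bx$ out into a single $\kronF{\bx}{k}$, (b) collapse the leftover $m\times m$ identity blocks into the $\Vec[\bI_m]$ and $\Vec[\bI_m]^\top$ contractions, and (c) absorb the reordering of Kronecker factors into the perfect-shuffle matrices $\perm{n^{i-1}}{n^q m}$. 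Re-indexing the double sum by $o=p+q$ (with $1\le o\le2\ell$) and $i+j=k-o+2$ then reproduces verbatim the summand of \cref{eq:LinSysForWk-Poly}. This reshaping is routine but long, and keeping the factor orderings consistent so that exactly one $\kronF{\bx}{k}$ emerges is the delicate part where essentially all of the bookkeeping lies.

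Finally I would dispatch solvability and the symmetrization step. Since $\bA-\eta\bB\bB^\top\bW_2$ is Hurwitz, the eigenvalues of $\cL_k(\bA-\eta\bB\bB^\top\bW_2)$ are the sums $\lambda_{i_1}+\dots+\lambda_{i_k}$ of $k$ of its eigenvalues, each with strictly negative real part and hence nonzero, so $\cL_k(\bA-\eta\bB\bB^\top\bW_2)^\top$ is invertible and $\tw_k$ is uniquely determined by \cref{eq:LinSysForWk-Poly}. Because the degree-$k$ HJB identity constrains only the action of the coefficient on the symmetric tensor $\kronF{\bx}{k}$, and because the operator $\cL_k(\cdot)^\top$ commutes with the symmetrization projector onto that subspace (the defining sum in \cref{eq:kWayLyapunov} being invariant under permuting the $k$ tensor slots), the true symmetric coefficient $\bw_k$ of the assumed analytic solution satisfies $\bw_k=\cL_k(\bA-\eta\bB\bB^\top\bW_2)^{-\top}(\text{symmetrized forcing})$; since symmetrization commutes with the inverse operator, this equals the symmetrization of $\tw_k$, which is exactly the claim. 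This completes the argument (the case $\bw_2=\Vec[\bW_2]$ having been handled at degree $2$ above).
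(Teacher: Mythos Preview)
Your proposal is correct and follows essentially the same route as the paper: the paper states that the proof of \cref{thm:wiPoly} ``mirrors the proof for \cref{thm:viPoly}'' with only sign/factor changes, and your sketch reproduces that argument (degree-2 terms give the ARE, the $\bw_k$-containing degree-$k$ terms assemble into $\cL_k(\bA-\eta\bB\bB^\top\bW_2)^\top\bw_k$, and the $\bF_p$, $\bB\bB^\top$, $\Hx_p$, $\G_p$ contributions are pulled into $\kronF{\bx}{k}$-form via the identities in \cref{tab:dimensions}). Two small remarks: in the paper's derivation of the $\G_p$ terms the identity you did not list, \cref{ID:vecKron3}, is the pivotal step that introduces the perfect-shuffle factor $\perm{n^{i-1}}{n^qm}$, so you will want it in the detailed write-up; and the solvability/symmetrization discussion you include is handled in the paper separately (Theorem~3 and the remark following the proof) rather than inside \cref{thm:wiPoly} itself.
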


Before proving \cref{thm:viPoly,thm:wiPoly}, a few intermediate results are necessary.
Inserting the polynomial forms of
$\bf(\bx)$, $\bg(\bx)$, and $\bh(\bx)$ from~\cref{eq:FOM-Poly}
into the HJB PDE~\cref{eq:Hinfty-Past-HJB} gives
\begin{subequations}\label{eq:Hinfty-Past-HJB-plugged-in}
    \begin{align}
        0  = & \frac{\partial \cE_{\gamma}^{-}(\bx)}{\partial \bx} \left[\bA \bx + \sum_{p=2}^\ell \bF_p \kronF{\bx}{p}\right]  \label{eq:Hinfty-Past-HJB-plugged-in-1}                                              \\
             & + \frac{1}{2}  \frac{\partial \cE_{\gamma}^{-}(\bx)}{\partial \bx} \left[\sum_{p=1}^\ell \G_p \left(\kronF{\bx}{p} \otimes \bI_m \right) + \bB\right] \times  \label{eq:Hinfty-Past-HJB-plugged-in-2} \\
             & \left[\sum_{p=1}^\ell  \left({\kronF{\bx}{p}}^\top \otimes \bI_m \right)\G_p^\top + \bB^\top\right] \frac{\partial^\top \cE_{\gamma}^{-}(\bx)}{\partial \bx} \label{eq:Hinfty-Past-HJB-plugged-in-3}  \\
             & - \frac{\eta}{2}\left[\bC \bx + \sum_{p=2}^\ell \Hx_p \kronF{\bx}{p}\right]^\top \left[\bC \bx + \sum_{p=2}^\ell \Hx_p \kronF{\bx}{p}\right] \label{eq:Hinfty-Past-HJB-plugged-in-4},
    \end{align}
\end{subequations}
where by the product rule, the gradient of the past energy function \cref{eq:vi-coeffs} in Kronecker product form~is
\begin{equation}\label{eq:Past-energy-deriv}
    \begin{split}
         & \frac{\partial \cE_{\gamma}^{-}(\bx)}{\partial \bx}
        = \frac{1}{2}\left (2\bx^\top\bV_2  \right.                                                                                                        \\
         & + \bv_3^\top (\bI_n \otimes \bx \otimes \bx) +  \bv_3^\top (\bx \otimes \bI_n \otimes \bx) + \bv_3^\top (\bx \otimes \bx \otimes \bI_n)         \\
         & + \left.\bv_4^\top (\bI_n \otimes \bx \otimes \bx \otimes \bx) + \bv_4^\top ( \bx \otimes \bI_n \otimes \bx \otimes \bx)     + \cdots \right ).
    \end{split}
\end{equation}
Since $\partial \cE_{\gamma}^{-}(\bx)/\partial \bx$ is known explicitly, the HJB PDE no longer contains derivatives, making it an algebraic equation rather than a differential equation.
Collecting terms of the same degree in $\bx$ gives a separate equation for each coefficient $\bv_k$.
However, writing the HJB PDEs \cref{eq:Hinfty-Past-HJB,eq:Hinfty-Future-HJB} with the explicit Kronecker product forms for $\bf$, $\bg$, $\bh$, $\cE_{\gamma}^{-}$, and $\cE_{\gamma}^{+}$ is an arduous task, let alone when the multiplication of all the polynomials in \cref{eq:Hinfty-Past-HJB-plugged-in} is expanded.
The next few \lcnamecrefs{thm:albrekht} and \lcnamecrefs{thm:Past-energy-deriv-compact} are therefore introduced to aid in writing things more compactly.
We focus on the results only for the past energy function coefficients $\bv_i$ for brevity; analogous results hold for the future energy coefficients $\bw_i$.
\begin{lemma}\label[lemma]{thm:albrekht}
    After plugging in polynomial expressions for the dynamics~\cref{eq:FOM-Poly} and the energy functions~\cref{eq:vi-coeffs}, the HJB PDE \cref{eq:Hinfty-Past-HJB} gives a set of equations for each coefficient $\bv_i$ for $i=2,3,\dots,d$.
    The equation for the degree~2 coefficient
    $\bv_2$
    is a \emph{quadratic} algebraic equation equivalent to the ARE \cref{eq:V2-2}.
\end{lemma}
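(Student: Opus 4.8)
The plan is to substitute the explicit gradient \cref{eq:Past-energy-deriv} into the expanded HJB equation \cref{eq:Hinfty-Past-HJB-plugged-in}, collect the terms that are homogeneous of degree two in $\bx$, and set their sum to zero; this will reproduce the ARE \cref{eq:V2-2}.

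First I would carry out a degree count to isolate the relevant terms. In \cref{eq:Past-energy-deriv} the lowest-degree part of $\partial\cE_\gamma^-/\partial\bx$ is the linear term $\bx^\top\bV_2$ --- the factor $\tfrac12$ in \cref{eq:vi-coeffs} cancels the $2$ produced by differentiating $\bx^\top\bV_2\bx$ --- while every term carrying $\bv_3,\bv_4,\dots$ has degree at least two. Similarly, in \cref{eq:FOM-Poly} the drift has lowest-degree term $\bA\bx$, the input matrix $\bg(\bx)$ has lowest-degree term $\bB$ (degree zero), and the output has lowest-degree term $\bC\bx$. Multiplying out \cref{eq:Hinfty-Past-HJB-plugged-in}, a degree-two monomial can therefore only arise from the product $(\bx^\top\bV_2)(\bA\bx)$ in the drift term, from the product $\tfrac12(\bx^\top\bV_2)\bB\bB^\top(\bV_2\bx)$ in the cross term (using that $\bV_2$ is symmetric, so $\bV_2^\top=\bV_2$), and from $-\tfrac\eta2(\bC\bx)^\top(\bC\bx)$ in the output term. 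Every other pairing of factors --- e.g. $\bx^\top\bV_2$ against $\bF_p\kronF{\bx}{p}$, the $\bv_3$-part of the gradient against $\bA\bx$, or $\bx^\top\bV_2$ against the degree-one part $\G_1(\bx\otimes\bI_m)$ of $\bg(\bx)$ --- yields a monomial of degree at least three, so it belongs to the equation for $\bv_3$ or higher rather than that for $\bv_2$.

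Collecting these contributions, the degree-two part of \cref{eq:Hinfty-Past-HJB-plugged-in} is the identity
\begin{equation*}
    0 = \bx^\top\bV_2\bA\bx + \tfrac12\,\bx^\top\bV_2\bB\bB^\top\bV_2\bx - \tfrac\eta2\,\bx^\top\bC^\top\bC\bx \qquad \text{for all } \bx\in\real^n .
\end{equation*}
Since $\bx^\top M\bx = \tfrac12\bx^\top(M+M^\top)\bx$ for any $M$, and since $\bV_2\bB\bB^\top\bV_2$ and $\bC^\top\bC$ are symmetric, this says that the symmetric matrix $\tfrac12(\bA^\top\bV_2+\bV_2\bA) + \tfrac12\bV_2\bB\bB^\top\bV_2 - \tfrac\eta2\bC^\top\bC$ induces the zero quadratic form and hence is itself the zero matrix; multiplying by $2$ gives precisely \cref{eq:V2-2}. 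Because the summand $\bV_2\bB\bB^\top\bV_2$ is quadratic in $\bV_2$ while the remaining summands are affine in $\bV_2$, this is a quadratic algebraic equation, which is the assertion. One could equivalently rewrite the identity in vectorized form for $\bv_2=\Vec[\bV_2]$ using the Kronecker identities of \cref{tab:dimensions}, but this is not needed for the statement.

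The only real obstacle I anticipate is making the degree count genuinely exhaustive --- i.e. verifying that no cross term between the quadratic part of the gradient and the linear part of the drift, and no term involving the variable part of $\bg(\bx)$, sneaks in at degree two --- together with the minor point that the passage from ``this quadratic form vanishes identically'' to ``this matrix is zero'' uses the standing convention (adopted just after \cref{eq:vi-coeffs}) that $\bV_2$ is symmetric. Neither point is deep, but both should be stated explicitly so that the reduction to \cref{eq:V2-2} is airtight.
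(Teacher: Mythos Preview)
Your proposal is correct and follows essentially the same approach as the paper: both count degrees to isolate the quadratic terms of \cref{eq:Hinfty-Past-HJB-plugged-in}, arriving at the identical scalar identity $0=\bx^\top\bV_2\bA\bx+\tfrac12\bx^\top\bV_2\bB\bB^\top\bV_2\bx-\tfrac\eta2\bx^\top\bC^\top\bC\bx$. The only cosmetic difference is in the last step---the paper extracts \cref{eq:V2-2} by ``differentiating twice with respect to $\bx$,'' whereas you pass to the symmetric part of the coefficient matrix; these are equivalent, and your version makes the use of the symmetry of $\bV_2$ explicit.
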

\begin{proof}
    The proof is centered around counting factors of $\bx$ that appear in each term.
    Observe that the lowest-order terms in the gradient of the energy function in \cref{eq:Past-energy-deriv} are order~1, meaning that order~2 terms can only contain one more factor of $\bx$.
    Hence the collection of degree~2 terms in \cref{eq:Hinfty-Past-HJB-plugged-in} is
    \begin{equation*}
        \begin{split}
            0  = & \bx^\top \bV_2  \bA \bx   + \frac{1}{2} \bx^\top \bV_2 \bB  \bB^\top \bV_2\bx  - \frac{\eta}{2} \bx^\top \bC^\top \bC \bx.
        \end{split}
    \end{equation*}
    Differentiating twice with respect to $\bx$ reveals the ARE \cref{eq:V2-2}.
\end{proof}
\begin{lemma}\label[lemma]{thm:albrekht-2}
    The equations for the remaining coefficients $\bv_k$ and $\bw_k$ for $3 \leq k \leq d$ are \emph{linear} algebraic equations.
\end{lemma}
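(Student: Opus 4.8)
The plan is to carry the degree-counting argument behind \cref{thm:albrekht} one step further. Fix $k$ with $3 \le k \le d$ and assume inductively that $\bv_2,\dots,\bv_{k-1}$ have already been determined from the lower-degree collections, so that $\bv_k$ is the only unknown remaining. Collecting the homogeneous degree-$k$ part of \cref{eq:Hinfty-Past-HJB-plugged-in} in $\bx$ produces the equation that defines $\bv_k$; I want to show this equation is affine in $\bv_k$. The first observation is that no coefficient $\bv_i$ with $i > k$ occurs in it: the gradient \cref{eq:Past-energy-deriv} contributes, for each $\bv_i$, a block of degree exactly $i-1$ in $\bx$, and every remaining factor in \cref{eq:Hinfty-Past-HJB-plugged-in} has degree at least $1$, so any product containing $\bv_i$ has degree at least $i > k$. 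Thus it suffices to check that $\bv_k$ enters the degree-$k$ collection linearly.

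First I would look at the transport term \cref{eq:Hinfty-Past-HJB-plugged-in-1}, i.e. $\frac{\partial \cE_\gamma^-}{\partial \bx}\,\bf(\bx)$. The block of \cref{eq:Past-energy-deriv} carrying $\bv_k$ has degree $k-1$; multiplied by the linear part $\bA\bx$ of $\bf$ it has degree $k$, but multiplied by any $\bF_p\kronF{\bx}{p}$ with $p \ge 2$ it has degree $k-1+p \ge k+1$. So in the degree-$k$ collection $\bv_k$ enters this term only through its pairing with $\bA$, which is linear in $\bv_k$; the nonlinear drift coefficients $\bF_p$ enter only via the already-known lower coefficients $\bv_i$, $i<k$.

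Next, and this is the essential case, I would look at the Hamiltonian term, lines \cref{eq:Hinfty-Past-HJB-plugged-in-2}--\cref{eq:Hinfty-Past-HJB-plugged-in-3}, a product of two copies of the gradient with $\bg(\bx)\bg(\bx)^\top$ between them. Suppose one gradient factor contributes the $\bv_k$-block (degree $k-1$), the other the $\bv_j$-block (degree $j-1$, with $j \ge 2$), and $\bg(\bx)\bg(\bx)^\top$ a block of degree $r \ge 0$ (its lowest part being $\bB\bB^\top$, with $r=0$). The product then has degree $(k-1)+(j-1)+r$, which equals $k$ only when $j=2$ and $r=0$. In particular $j = k$ is impossible for $k \ge 3$, since it would force degree at least $2k-2 > k$; hence no term quadratic in $\bv_k$ survives, and $\bv_k$ couples here only with the known $\bV_2$ through $\bB\bB^\top$, again linearly. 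The output term \cref{eq:Hinfty-Past-HJB-plugged-in-4} involves none of the energy coefficients. Assembling these contributions, the degree-$k$ equation has the form $\cA_k\,\bv_k = \br_k$, where $\cA_k$ is a fixed matrix and $\br_k$ is built only from $\bA,\bF_p,\bB,\G_p,\bC,\Hx_p$ and $\bv_2,\dots,\bv_{k-1}$ — a linear algebraic equation (the subsequent lemmas identify $\cA_k$ with $\cL_k(\bA + \bB\bB^\top\bV_2)^\top$ after vectorization and make $\br_k$ explicit as the right-hand side of \cref{eq:LinSysForVk-Poly}). The identical count for the future HJB PDE \cref{eq:Hinfty-Future-HJB}, whose Hamiltonian term carries $-\eta\,\bB\bB^\top$ in place of $\bB\bB^\top$, shows the equations for $\bw_k$, $3 \le k \le d$, are linear as well, matching \cref{eq:LinSysForWk-Poly}.

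The step I expect to demand the most care is verifying that the degree count in the Hamiltonian term is \emph{tight}, since this is precisely what separates $k \ge 3$ from $k = 2$: one must use that the lowest-degree block of the gradient \cref{eq:Past-energy-deriv} has degree exactly $1$ (from $\bV_2$) and that $\bg(\bx)\bg(\bx)^\top$ has a nonzero degree-$0$ part (from $\bB\bB^\top$), so that a product quadratic in $\bv_k$ necessarily has degree at least $2k-2$, which strictly exceeds $k$ exactly when $k > 2$. This is the very inequality that is violated at $k=2$, explaining why the $\bv_2$ equation is genuinely quadratic — the ARE \cref{eq:V2-2} established in \cref{thm:albrekht} — while every higher-degree equation collapses to a linear one. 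The remaining work, the Kronecker-product and perfect-shuffle bookkeeping needed to write the linear system explicitly, is independent of the linearity claim and is handled by the intermediate results that follow.
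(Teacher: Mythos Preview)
Your proposal is correct and follows essentially the same degree-counting approach as the paper: you identify that the only degree-$k$ terms containing $\bv_k$ arise from its pairing with $\bA\bx$ in the drift and with $\bB\bB^\top$ together with $\bv_2$ in the Hamiltonian term, each entering linearly. Your version is considerably more detailed than the paper's terse four-sentence proof---in particular you spell out why no $\bv_i$ with $i>k$ can appear and why the quadratic self-coupling of $\bv_k$ in the Hamiltonian term has degree $2k-2>k$---but the underlying argument is the same.
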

\begin{proof}
    The equation for the $k$th coefficient $\bv_k$ comes from collecting the degree $k$ terms in \cref{eq:Hinfty-Past-HJB-plugged-in}.
    There are only two types of degree~$k$ terms containing $\bv_k$: the terms containing $\bA$ and the terms containing $\bB \bB^\top$, which also contain $\bv_2$.
    All of the other terms with $\bv_k$ are at least degree $k+1$, as can be verified by counting how many factors of $\bx$ they contain.
    Since the degree~$k$ terms containing $\bv_k$ only contain one factor of $\bv_k$, the resulting algebraic equation is linear.
\end{proof}

\begin{lemma}\label{thm:LHS}
    The coefficient matrix for the
    equation for $\bv_k$ for $3 \leq k \leq d$ has the form $\cL_{k} \left(\bA + \bB \bB^\top  \bV_2 \right)^\top$.
\end{lemma}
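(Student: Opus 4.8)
The plan is to extract from the degree-$k$ part of the expanded HJB equation \cref{eq:Hinfty-Past-HJB-plugged-in} precisely the terms carrying a factor of $\bv_k$, and to show that they assemble into $\cL_k(\bA + \bB\bB^\top\bV_2)^\top \bv_k$. By \cref{thm:albrekht-2} and its proof, only two families of degree-$k$ terms contain $\bv_k$: (i) the contribution of $\bv_k$ to the drift term $\tfrac{\partial \cE_\gamma^{-}(\bx)}{\partial \bx}\bA\bx$ in \cref{eq:Hinfty-Past-HJB-plugged-in-1}; and (ii) the cross terms of the quadratic form \cref{eq:Hinfty-Past-HJB-plugged-in-2,eq:Hinfty-Past-HJB-plugged-in-3} in which one gradient factor supplies its $\bv_k$-part, the other supplies its $\bv_2$-part, and the intervening factor is $\bB\bB^\top$; every other pairing has degree at least $k+1$. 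I would treat (i) and (ii) separately, using the gradient formula \cref{eq:Past-energy-deriv}, from which the $\bv_k$-part of $\tfrac{\partial \cE_\gamma^{-}(\bx)}{\partial \bx}$ is $\tfrac12 \bv_k^\top \sum_{j=1}^k (\kronF{\bx}{j-1} \otimes \bI_n \otimes \kronF{\bx}{k-j})$.

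For family (i), I would apply \cref{ID:T2.4} repeatedly to move $\bA$ through the Kronecker product: $(\kronF{\bx}{j-1} \otimes \bI_n \otimes \kronF{\bx}{k-j})\bA\bx = \kronF{\bx}{j-1} \otimes \bA\bx \otimes \kronF{\bx}{k-j} = (\bI_{n^{j-1}} \otimes \bA \otimes \bI_{n^{k-j}})\kronF{\bx}{k}$. Summing over $j = 1,\dots,k$ and comparing with the definition \cref{eq:kWayLyapunov} of the $k$-way Lyapunov matrix, this family equals $\tfrac12 \bv_k^\top \cL_k(\bA)\kronF{\bx}{k}$; transposing, its contribution to the coefficient of $\kronF{\bx}{k}$ is $\tfrac12 \cL_k(\bA)^\top \bv_k$.

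For family (ii), the two orderings of the quadratic form are scalars that are transposes of one another, hence equal, so together they cancel the leading $\tfrac12$ in \cref{eq:Hinfty-Past-HJB-plugged-in-2}. Because $\bV_2$ is symmetric, the $\bv_2$-part of $\tfrac{\partial^\top \cE_\gamma^{-}(\bx)}{\partial \bx}$ is simply $\bV_2\bx$. Applying \cref{ID:T2.4} once more, $(\kronF{\bx}{j-1} \otimes \bI_n \otimes \kronF{\bx}{k-j})\bB\bB^\top\bV_2\bx = (\bI_{n^{j-1}} \otimes \bB\bB^\top\bV_2 \otimes \bI_{n^{k-j}})\kronF{\bx}{k}$, so summing over $j$ and transposing — using $(\bB\bB^\top\bV_2)^\top = \bV_2\bB\bB^\top$ by symmetry of $\bV_2$ — this family contributes $\tfrac12 \cL_k(\bB\bB^\top\bV_2)^\top \bv_k$ to the coefficient of $\kronF{\bx}{k}$. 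Adding the two contributions and using linearity of the map $\bA \mapsto \cL_k(\bA)$ gives $\tfrac12(\cL_k(\bA)^\top + \cL_k(\bB\bB^\top\bV_2)^\top)\bv_k = \tfrac12\,\cL_k(\bA + \bB\bB^\top\bV_2)^\top \bv_k$; since the same factor $\tfrac12$ multiplies the entire degree-$k$ equation, it cancels, leaving $\cL_k(\bA + \bB\bB^\top\bV_2)^\top$ as the coefficient matrix of $\bv_k$, as claimed.

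The part that needs care is the bookkeeping of the numerical prefactors and of the symmetry: counting the $\bv_k$--$\bv_2$ cross term with multiplicity two so that it cancels the $\tfrac12$, keeping the $\tfrac12$ arising from differentiating $\tfrac12\bv_i^\top\kronF{\bx}{i}$ consistent across the whole equation, and invoking symmetry of $\bV_2$ at the two places it is needed. The purely algebraic maneuver of pulling a matrix out of a Kronecker power of $\bx$, namely $(\kronF{\bx}{a} \otimes \bI_n \otimes \kronF{\bx}{b})\bM\bx = (\bI_{n^a} \otimes \bM \otimes \bI_{n^b})\kronF{\bx}{k}$, is routine given \cref{ID:T2.4}. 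One should also remember that equating coefficients of $\kronF{\bx}{k}$ only determines the equation on the symmetric subspace of $\real^{n^k}$ — which is exactly why \cref{thm:viPoly} solves for $\tv_k$ and then symmetrizes — but this does not affect the form of the coefficient matrix.
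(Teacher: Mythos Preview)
Your proposal is correct and follows essentially the same approach as the paper. The paper's proof is terser: it immediately writes the $\bv_k$-terms as $\tfrac12\,\bv_k^\top\big(\sum_j \kronF{\bx}{j-1}\otimes\bI_n\otimes\kronF{\bx}{k-j}\big)(\bA+\bB\bB^\top\bV_2)\bx$ and then says ``using the Kronecker product identities \dots one can verify'' that this equals $\tfrac12\,\bv_k^\top\cL_k(\bA+\bB\bB^\top\bV_2)\kronF{\bx}{k}$; you have simply unpacked both steps, treating the $\bA$ and $\bB\bB^\top\bV_2$ contributions separately and making explicit the multiplicity-two count of the $\bv_k$--$\bv_2$ cross term, the use of symmetry of $\bV_2$, and the identity $(\kronF{\bx}{a}\otimes\bI_n\otimes\kronF{\bx}{b})\bM\bx=(\bI_{n^a}\otimes\bM\otimes\bI_{n^b})\kronF{\bx}{k}$.
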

\begin{proof}
    The collection of degree~$k$ terms
    in \cref{eq:Hinfty-Past-HJB-plugged-in}
    is
    \begin{align*}
         & \frac{1}{2} \bv_k^\top\left( (  \bI_n \otimes  \kronF{\bx}{k-1}) +  ( \bx \otimes \bI_n \otimes  \kronF{\bx}{k-2})+ \cdots \right) \times \\
         & \qquad\left(\bA + \bB\bB^\top \bV_2 \right) \bx = (\text{everything else})\kronF{\bx}{k}.
    \end{align*}
    Here, we have separated the terms on the left of the equals sign as the terms containing $\bv_k$, and the terms on the right are all of the remaining terms of degree $k$ that do not contain $\bv_k$;
    they will be derived later.
    Using the Kronecker product identities in \cref{tab:dimensions}, one can verify that this is equivalent to
    \begin{align}
        \frac{1}{2} \bv_k^\top\cL_k \left(\bA + \bB\bB^\top \bV_2 \right) \kronF{\bx}{k} = (\text{everything else})\kronF{\bx}{k}.\label{eq:kth-terms-sofar-2}
    \end{align}
    Requiring this to hold for all $\bx$ and transposing leads to a linear system for $\bv_k$ with coefficient matrix $\cL_{k} \left(\bA + \bB \bB^\top  \bV_2 \right)^\top$.
\end{proof}

\cref{thm:LHS} proves the left-hand-side of the linear system \cref{eq:LinSysForVk-Poly}; what remains is to prove the right-hand-side, which consists of the contributions due to $\bF_p$, $\G_p$, and $\Hx_p$.
The following two observations help to simplify the remaining derivations.
\begin{corollary}\label{thm:G-RHS}
    The nonlinear contributions in the dynamics, namely
    $\bF_p$, $\G_p$, and $\Hx_p$, only appear on the right-hand sides of the
    equations for the coefficients $\bv_k$ for $3 \leq k \leq d$.
\end{corollary}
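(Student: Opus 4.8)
The plan is to show that in the degree-$k$ balance extracted from the expanded HJB PDE \cref{eq:Hinfty-Past-HJB-plugged-in}, every term carrying one of the nonlinear blocks $\bF_p$, $\G_p$, or $\Hx_p$ is free of the unknown coefficient $\bv_k$. Since \cref{thm:LHS} has already shown that the totality of degree-$k$ terms containing $\bv_k$ is collected into the left-hand operator $\cL_k(\bA+\bB\bB^\top\bV_2)^\top$, it then follows immediately that all nonlinear contributions land on the right-hand side. The argument is a sharpening of the degree-counting used in the proofs of \cref{thm:albrekht-2,thm:LHS}, so first I would make that bookkeeping precise: in the gradient \cref{eq:Past-energy-deriv}, the $\bv_j$-term has degree $j-1$, the lowest being the degree-$1$ piece $\bx^\top\bV_2$.

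The drift and output lines are then immediate. In line \cref{eq:Hinfty-Past-HJB-plugged-in-1}, multiplying the degree-$(k-1)$ $\bv_k$-term of the gradient by $\bF_p\kronF{\bx}{p}$ yields degree $(k-1)+p\geq k+1$ for $p\geq2$, so the only degree-$k$ term containing $\bv_k$ comes from the linear piece $\bA\bx$ and carries no nonlinear data. The output term \cref{eq:Hinfty-Past-HJB-plugged-in-4} has no factor of the energy gradient at all, hence no factor of any $\bv_k$; in particular every appearance of $\Hx_p$ (and of $\bC$) lives there and contributes only to the source.

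The main work---and the step I expect to be the main obstacle---is the quadratic term \cref{eq:Hinfty-Past-HJB-plugged-in-2,eq:Hinfty-Past-HJB-plugged-in-3} involving $\bg(\bx)\bg(\bx)^\top$, where the gradient appears twice and $\bg(\bx)$ is itself polynomial. I would argue that a degree-$k$ summand containing $\bv_k$ necessarily pairs the degree-$(k-1)$ $\bv_k$-term of one gradient factor with (i) the other gradient factor, of degree $\geq1$, and (ii) the middle product $\bigl[\sum_p\G_p(\kronF{\bx}{p}\otimes\bI_m)+\bB\bigr]\bigl[\sum_p({\kronF{\bx}{p}}^\top\otimes\bI_m)\G_p^\top+\bB^\top\bigr]$, of degree $\geq0$; the total degree is therefore at least $(k-1)+1+0=k$, with equality only if the second gradient factor is exactly $\bx^\top\bV_2$ and both middle factors are the constant blocks $\bB$ and $\bB^\top$. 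Any $\G_p$ with $p\geq1$ in the middle raises the degree to $\geq k+1$, so it cannot occur in the degree-$k$ equation alongside $\bv_k$; likewise a higher-order piece $\bv_j$ ($j\geq3$) of the second gradient factor forces degree $\geq k+1$, which is exactly why the genuinely nonlinear $\bV_i\bB\bB^\top\bV_j$ cross terms on the right need $i,j>2$ while the $i=2$ or $j=2$ cases are absorbed into the left-hand operator. Hence the unique degree-$k$, $\bv_k$-containing contribution of this term is, up to scalars, $\bv_k^\top\cL_k(\bB\bB^\top\bV_2)\kronF{\bx}{k}$.

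Putting the three cases together, the complete set of degree-$k$ terms containing $\bv_k$ is exactly $\tfrac12\bv_k^\top\cL_k(\bA+\bB\bB^\top\bV_2)\kronF{\bx}{k}$, matching \cref{thm:LHS}, and this involves only $\bA$, $\bB$, and $\bV_2$; moreover $\bV_2$ is pinned down by the ARE \cref{eq:V2-2}, which again involves none of $\bF_p$, $\G_p$, $\Hx_p$. Therefore every term carrying a nonlinear block must sit on the right-hand side of the linear system for $\bv_k$, which is the claim of \cref{thm:G-RHS}; the argument for the future-energy coefficients $\bw_k$ is identical.
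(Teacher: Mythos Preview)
Your proposal is correct and follows essentially the same degree-counting argument that the paper uses in the proofs of \cref{thm:albrekht-2,thm:LHS}, from which this corollary is stated without its own proof; you simply make explicit the bookkeeping that the paper summarizes as ``can be verified by counting how many factors of $\bx$ they contain.'' Your case analysis for the quadratic $\bg(\bx)\bg(\bx)^\top$ term in particular spells out precisely why any $\G_p$ with $p\geq1$ pushes the degree past $k$, which the paper leaves implicit.
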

\begin{corollary}\label{thm:Past-energy-deriv-compact}
    When computing the $k$th coefficient of the energy function $\bv_k$,
    the terms containing $\bF_p$ and $\G_p$ only appear with coefficients $\bv_2,\dots,\bv_{k-1}$.
    These coefficients have already been computed, so they are symmetric by construction.
\end{corollary}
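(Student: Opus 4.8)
The plan is to establish the corollary by refining the degree-counting argument already used in the proofs of \cref{thm:albrekht-2,thm:LHS}, now keeping track of precisely which energy coefficients $\bv_i$ can be paired with a nonlinear drift block $\bF_p$ ($p\geq 2$) or input block $\G_p$ ($p\geq 1$) in a degree-$k$ term of the expanded HJB equation \cref{eq:Hinfty-Past-HJB-plugged-in}. First I would tabulate the number of factors of $\bx$ contributed by each ingredient: a summand of the gradient \cref{eq:Past-energy-deriv} involving $\bv_i$ has degree $i-1$; $\bA\bx$ has degree $1$ and $\bB$ has degree $0$; $\bF_p\kronF{\bx}{p}$ has degree $p\geq 2$; and $\G_p(\kronF{\bx}{p}\otimes\bI_m)$ has degree $p\geq 1$. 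The measurement term \cref{eq:Hinfty-Past-HJB-plugged-in-4} contains no $\bF_p$, $\G_p$, or $\bv_i$, so it plays no role.

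Next I would go through the two remaining groups of terms. In the drift contribution \cref{eq:Hinfty-Past-HJB-plugged-in-1}, a summand pairing $\bv_i$ with $\bF_p$ has total degree $(i-1)+p$, and setting this equal to $k$ forces $i = k-p+1\leq k-1$ since $p\geq 2$. In the quadratic gradient contribution \cref{eq:Hinfty-Past-HJB-plugged-in-2,eq:Hinfty-Past-HJB-plugged-in-3}, a summand pairs $\bv_i$ on the left and $\bv_j$ on the right while the two factors coming from $\bg(\bx)$ and $\bg(\bx)^\top$ contribute degrees $a$ and $b$, each of which is either $0$ (the $\bB$ term) or some $p\geq 1$ (a $\G_p$ term); the total degree is $(i-1)+a+b+(j-1)$. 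When both factors are $\bB$ this gives $i+j=k+2$, so the only new coefficient reachable is $\bv_k$ paired with $\bv_2$ through $\bB\bB^\top$, in agreement with \cref{thm:albrekht-2,thm:LHS}; but as soon as at least one factor is a nonlinear block $\G_p$ we have $a+b\geq 1$, hence $i+j = k-(a+b)+2\leq k+1$, and since $i,j\geq 2$ this yields $\max(i,j)\leq k-1$. Thus every occurrence of $\bF_p$ or $\G_p$ in the degree-$k$ equation multiplies only coefficients of index at most $k-1$, sharpening \cref{thm:G-RHS}. The final sentence of the corollary then follows from the recursive structure of \cref{thm:viPoly}: $\bv_2 = \Vec[\bV_2]$ is symmetric by construction and each $\bv_i$ for $3\leq i\leq k-1$ is produced by the symmetrization step, so all coefficients appearing next to $\bF_p$ or $\G_p$ when the equation for $\bv_k$ is assembled have already been computed and are symmetric. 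The same count applies verbatim to the future coefficients $\bw_k$ via \cref{thm:wiPoly}.

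I do not expect a genuine obstacle here, since the argument is only a slightly more careful version of the accounting in \cref{thm:albrekht-2}. The single point that requires attention is the identity matrix $\bI_m$ carried inside each input block $\G_p(\kronF{\bx}{p}\otimes\bI_m)$: using the mixed-product property one has $(\kronF{\bx}{p}\otimes\bI_m)({\kronF{\bx}{q}}^\top\otimes\bI_m) = (\kronF{\bx}{p}{\kronF{\bx}{q}}^\top)\otimes\bI_m$, so the $m$-dimensional index is merely contracted between the two $\G$-blocks and never produces an extra factor of $\bx$. Keeping this $m$-index bookkeeping separate from the count of state factors is the only subtlety in an otherwise routine argument.
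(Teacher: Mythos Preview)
Your proposal is correct and follows the same degree-counting approach the paper relies on; indeed, the paper states this corollary as an observation without explicit proof, since it is an immediate refinement of the accounting in \cref{thm:albrekht-2,thm:LHS}. Your argument simply makes that counting explicit, and the final remark about the $\bI_m$ bookkeeping is a helpful clarification but not a point of divergence.
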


Leveraging this symmetry, we rewrite the gradient of the energy function~\cref{eq:Past-energy-deriv}
using \cref{ID:T2.5,thm:symPermutation}~as
\begin{align}
    \frac{\partial \cE_{\gamma}^{-}(\bx)}{\partial \bx}
     & = \frac{1}{2}\left (2\bv_2^\top (\bI_n \otimes \bx)  + 3\bv_3^\top (\bI_n \otimes \bx \otimes \bx)
    + \cdots \right ) \nonumber                                                                                         \\
     & = \frac{1}{2}\sum_{i=2}^{k-1} i \bv_i^\top (\bI_n \otimes \kronF{\bx}{i-1}).\label{eq:Past-energy-deriv-compact}
\end{align}
Note that we truncate the terms above $i=k-1$ since the coefficients above $\bv_{k-1}$ do not enter the analysis regarding contributions from the inclusion of $\bF_p$, $\G_p$, and $\Hx_p$.

Now we are ready to finish the proof of \cref{thm:viPoly}.
In particular, we focus on proving the terms containing contributions from $\bF_p$, $\G_p$, and $\Hx_p$,
which corresponds to the right-hand-side of \cref{eq:LinSysForVk-Poly}, or the terms labeled ``(everything else)'' in \cref{eq:kth-terms-sofar-2}.

\noindent\hspace{2em}{\itshape Proof of \cref{thm:viPoly}: }
Upon inserting the polynomial expansions for $\bf(\bx)$, $\bg(\bx)$, and $\bh(\bx)$ from \cref{eq:FOM-Poly}, along with the gradient of the energy function \cref{eq:Past-energy-deriv-compact} exploiting the symmetry from~\cref{thm:Past-energy-deriv-compact}, the HJB PDE \cref{eq:Hinfty-Past-HJB} becomes
\small
\begin{subequations}\label{eq:Hinfty-Past-HJB-plugged-in-symmetric}
    \begin{align}
         & 0  =  \frac{1}{2} \left[\sum_{i=2}^{k-1} i \bv_i^\top (\bI_n \otimes \kronF{\bx}{i-1})\right] \left[\bA \bx + \sum_{p=2}^\ell \bF_p \kronF{\bx}{p}\right]  \label{eq:Hinfty-Past-HJB-plugged-in-symmetric-1}                           \\
         & + \frac{1}{8} \left[\sum_{i=2}^{k-1} i \bv_i^\top (\bI_n \otimes \kronF{\bx}{i-1})\right] \left[\sum_{p=1}^\ell \G_p \left(\kronF{\bx}{p} \otimes \bI_m \right) + \bB\right] \times  \label{eq:Hinfty-Past-HJB-plugged-in-symmetric-2} \\
         & \left[\sum_{q=1}^\ell  \left({\kronF{\bx}{q}}^\top \otimes \bI_m \right)\G_q^\top + \bB^\top\right] \left[\sum_{j=2}^{k-1} (\bI_n \otimes {\kronF{\bx}{j-1}}^\top) \bv_j j\right] \label{eq:Hinfty-Past-HJB-plugged-in-symmetric-3}    \\
         & - \frac{\eta}{2}\left[\bx^\top \bC^\top + \sum_{p=2}^\ell {\kronF{\bx}{p}}^\top \Hx_p^\top \right] \left[\bC \bx + \sum_{q=2}^\ell \Hx_q \kronF{\bx}{q}\right] \label{eq:Hinfty-Past-HJB-plugged-in-symmetric-4}.
    \end{align}
\end{subequations}
\normalsize
The summation indices $i$, $j$, $p$, and $q$ are introduced to keep track of the number of factors of $\bx$ when we expand the multiplication of these polynomials and collect terms of the same degree.
From the collection of degree~2 terms, \cref{thm:albrekht} shows that $\bV_2$ solves ARE \cref{eq:V2-2}.
For $3 \leq k \leq d$, the collection of degree~$k$ terms leads to a linear algebraic equation for $\bv_k$; \cref{thm:LHS} shows that the coefficient matrix for the linear system is as in \cref{eq:LinSysForVk-Poly}.
Therefore we need to prove the additional terms on the right-hand-side of \cref{eq:LinSysForVk-Poly}, starting with the set of terms containing $\bF_p$.
An arbitrary $k$th-order term from \cref{eq:Hinfty-Past-HJB-plugged-in-symmetric-1} containing $\bF_p$ is
\begin{align*}
     & \frac{1}{2}i \bv_i^\top (\bI_n \otimes \kronF{\bx}{i-1}) \bF_p \kronF{\bx}{p}, &  & \text{with} &  & p + i - 1 = k.
\end{align*}
The quantity $\bF_p \kronF{\bx}{p}$ has dimension $n \times 1$, so we apply \cref{ID:T2.17} and then \cref{ID:T2.4} to combine the factors of $\bx$ to rewrite this as
$\frac{1}{2}i \bv_i^\top (\bF_p  \otimes \bI_{n^{i-1}}) \kronF{\bx}{k}$.
The multiplication by $i$ is expanded into a sum of $i$ terms; then, since $\bv_i$ and $\kronF{\bx}{k}$ are symmetric as in \cref{def:sym}, \cref{thm:symPermutation} allows us to permute the quantities on the right, leading to the definition of the $i$-way Lyapunov matrix:
\begin{align}
    \frac{1}{2}i \bv_i^\top (\bF_p  \otimes \bI_{n^{i-1}})\kronF{\bx}{k} & = \frac{1}{2}\bv_i^\top (\bF_p  \otimes \bI_n \otimes \dots  \otimes \bI_n)\kronF{\bx}{k}  \nonumber \\
                                                                         & + \frac{1}{2}\bv_i^\top (\bI_n \otimes \bF_p  \otimes \dots  \otimes \bI_n)\kronF{\bx}{k}  \nonumber \\
    + \dots                                                              & + \frac{1}{2}\bv_i^\top (   \bI_n \otimes \dots \otimes \bI_n \otimes \bF_p)\kronF{\bx}{k} \nonumber \\
                                                                         & = \frac{1}{2}\bv_i^\top \cL_i(\bF_p) \kronF{\bx}{k}.\label{eq:N-terms}
\end{align}

Moving on, we write an arbitrary $k$th-order term from \cref{eq:Hinfty-Past-HJB-plugged-in-symmetric-2,eq:Hinfty-Past-HJB-plugged-in-symmetric-3} containing $\G_p$ as
\small
\begin{align}\label{eq:vi-G-terms-1}
     & \frac{1}{8} i \bv_i^\top (\bI_n \otimes \kronF{\bx}{i-1}) \G_p (\kronF{\bx}{p} \otimes \bI_m)  \times                     \\
     & \hspace*{2.5cm} ({\kronF{\bx}{q}}^\top \otimes \bI_m)\G_q^\top (\bI_n \otimes {\kronF{\bx}{j-1}}^\top) \bv_j j, \nonumber
\end{align}
\normalsize
with $p \in \left[0, o\right]$, $o \in \left[1,2\ell\right]$,
$q = o - p$, and $ i + j + o = k + 2$.
For now, we drop the $1/8$ factor for readability.
Due to the symmetry of $\bv_i$ and $\bv_j$,
the factors $\bv_i^\top (\bI_n \otimes \kronF{\bx}{i-1})$ and $(\bI_n \otimes {\kronF{\bx}{j-1}}^\top) \bv_j$
can be simplified with \cref{ID:T3.4} to rewrite \cref{{eq:vi-G-terms-1}} as
\begin{align*}
     & ij~{\kronF{\bx}{i-1}}^\top  \bV_i^\top \left[ \G_p (\kronF{\bx}{p} \otimes \bI_m)({\kronF{\bx}{q}}^\top \otimes \bI_m)\G_q^\top \right] \bV_j {\kronF{\bx}{j-1}}.
\end{align*}
From here, applying \cref{ID:nonsymmetric-quadraticForm} combines the outer $\bx$ factors to give
\small
\begin{align}
     & = ij~\Vec \left[\left[ \bV_i^\top \G_p (\kronF{\bx}{p} \otimes \bI_m)\right]({\kronF{\bx}{q}}^\top \otimes \bI_m) \left[\G_q^\top \bV_j \right]\right]^\top {\kronF{\bx}{\underset{\!-2}{i+j}}}. \label{eq:vi-G-terms-2}
\end{align}
\normalsize
Note where we placed extra brackets grouping factors in the $\Vec[\cdot]^\top$ portion to apply \cref{ID:T2.13}, which leads to
\small
\begin{align} \label{eq:vi-G-terms-3}
     & = ij~\Vec\left[  {\kronF{\bx}{q}}^\top \otimes \bI_m\right]^\top \left(\G_q^\top  \bV_j \otimes ({\kronF{\bx}{p}}^\top \otimes \bI_m) \G_p^\top \bV_i\right) {\kronF{\bx}{\underset{\!-2}{i+j}}}.
\end{align}
\normalsize
Since this whole quantity is a scalar, we enclose it in $\Vec{[\cdot]}$, apply \cref{ID:T2.13} again, and transpose the result to obtain
\begin{equation}
    \begin{split} \label{eq:vi-G-terms-4}
         & = ij~\Vec\left[\G_q^\top  \bV_j \otimes ({\kronF{\bx}{p}}^\top \otimes \bI_m) \G_p^\top \bV_i \right]^\top \times \\
         & \hspace{3cm}\left({\kronF{\bx}{i+j-2}} \otimes \Vec\left[  {\kronF{\bx}{q}}^\top \otimes \bI_m\right] \right).
    \end{split}
\end{equation}
The factor ${\kronF{\bx}{q}}^\top$ is extracted using \cref{ID:vecKron1}, leading to
\begin{equation}
    \begin{split} \label{eq:vi-G-terms-5}
         & = ij~\Vec\left[\G_q^\top  \bV_j \otimes ({\kronF{\bx}{p}}^\top \otimes \bI_m) \G_p^\top \bV_i \right]^\top \times \\
         & \hspace{4cm} \left({\kronF{\bx}{i+j+q-2}} \otimes \Vec\left[ \bI_m\right] \right).
    \end{split}
\end{equation}
Noting that $i+j+q-2 = k-p$ and applying \cref{ID:T2.17}, we can pull out the factor of ${\kronF{\bx}{i+j+q-2}}={\kronF{\bx}{k-p}}$ to reach
\begin{equation}
    \begin{split} \label{eq:vi-G-terms-6}
         & = ij~\Vec\left[\G_q^\top  \bV_j \otimes ({\kronF{\bx}{p}}^\top \otimes \bI_m) \G_p^\top \bV_i \right]^\top \times \\
         & \hspace{3.75cm}\left( \bI_{n^{k-p}} \otimes \Vec\left[ \bI_m\right] \right) {\kronF{\bx}{k-p}}.
    \end{split}
\end{equation}
To extract the remaining $\kronF{\bx}{p}$ from the $\Vec[\cdot]^\top$ factor, we need to apply \cref{ID:vecKron3}.
Isolating this $\Vec[\cdot]$ factor (without the transpose) while we make these simplifications, \cref{ID:vecKron3} gives
\begin{equation}\label{eq:vi-G-terms-6-2}
    \begin{split}
         & \Vec\left[\G_q^\top  \bV_j \otimes ({\kronF{\bx}{p}}^\top \otimes \bI_m) \G_p^\top \bV_i \right]                                           \\
         & \quad = \bP \left(\Vec\left[\G_q^\top \bV_j \right] \otimes \Vec\left[({\kronF{\bx}{p}}^\top \otimes \bI_m)\G_p^\top \bV_i \right]\right),
    \end{split}
\end{equation}
where we denote the special permutation matrix from \cref{ID:vecKron3} as $\bP~\coloneqq~\left(\bI_{n^{j-1}} \otimes \perm{n^q m}{n^{i-1}} \otimes \bI_m \right)$ for readability.
The factor {\small$\Vec\left[({\kronF{\bx}{p}}^\top \otimes \bI_m)\G_p^\top \bV_i \right]$} at the end can be manipulated using the second form of \cref{ID:T3.4}
and \cref{ID:vecKron1} into
    {\small$\left(\bV_i^\top \G_p \otimes \bI_m \right) \left(\kronF{\bx}{p} \otimes \Vec\left[ \bI_m\right]\right)$}.
Plugging this back into \cref{eq:vi-G-terms-6-2}, the $\Vec[\cdot]$ factor becomes
\small
\begin{align*}
     & = \bP \left(\Vec\left[\G_q^\top \bV_j \right] (1) \otimes \left(\left(\bV_i^\top \G_p \otimes \bI_m \right) \left(\kronF{\bx}{p} \otimes \Vec\left[ \bI_m\right]\right)\right)\right).
\end{align*}
\normalsize
Introducing the factor of 1
in the second line enables a subtle but critical step; it
allows us to pull the factor containing $\kronF{\bx}{p}$ out from within the nested products with \cref{ID:T2.4}:
\small
\begin{align*}
     & = \bP \left(\Vec\left[\G_q^\top \bV_j \right] \otimes
    \left(\bV_i^\top \G_p \otimes \bI_m \right) \right)
    \left(\cancel{1 \otimes} \kronF{\bx}{p} \otimes \Vec\left[ \bI_m\right]\right) .
\end{align*}
\normalsize
The $\kronF{\bx}{p}$ factor is extracted at last with \cref{ID:T2.17} to obtain
\small
\begin{align*}
     & = \bP \left(\Vec\left[\G_q^\top \bV_j \right] \otimes
    \left(\bV_i^\top \G_p \otimes \bI_m \right) \right)
    \left(\bI_{n^p} \otimes \Vec\left[ \bI_m\right]\right) \kronF{\bx}{p} .
\end{align*}
\normalsize
Transposing this entire quantity and inserting it back into the HJB term \cref{eq:vi-G-terms-6} gives
\small
\begin{align}
     & ij~{\kronF{\bx}{p}}^\top \left(\bI_{n^p} \otimes \Vec\left[ \bI_m\right]^\top\right)   \times \label{eq:vi-G-terms-7} \\
     & \left(\Vec\left[\G_q^\top \bV_j \right]^\top \otimes \left(\G_p^\top \bV_i \otimes \bI_m \right) \right)\bP^\top
    \left( \bI_{n^{k-p}} \otimes \Vec\left[ \bI_m\right] \right) {\kronF{\bx}{k-p}}. \nonumber
\end{align}
\normalsize
Finally, \cref{ID:nonsymmetric-quadraticForm} is used to combine the $\kronF{\bx}{p}$ factor on the left with the $\kronF{\bx}{k-p}$ on the right as desired:
\small
\begin{align}
     & = ij~\Vec \Biggl[ \left(\bI_{n^p} \otimes \Vec\left[ \bI_m\right]^\top\right)  \times \label{eq:G-terms}          \\
     & \left(\Vec\left[\G_q^\top \bV_j \right]^\top \otimes \left(\G_p^\top \bV_i \otimes \bI_m \right) \right) \bP^\top
        \left( \bI_{n^{k-p}} \otimes \Vec\left[ \bI_m\right] \right)\Biggr]^\top {\kronF{\bx}{k}}. \nonumber
\end{align}
\normalsize

We must also consider the case of \cref{eq:vi-G-terms-1} with $p=q=0$, which corresponds to terms of the form
$
    \frac{1}{8} i \bv_i^\top (\bI_n \otimes \kronF{\bx}{i-1}) \bB \bB^\top (\bI_n \otimes {\kronF{\bx}{j-1}}^\top) \bv_j j,
$
from \cref{eq:Hinfty-Past-HJB-plugged-in-symmetric-2,eq:Hinfty-Past-HJB-plugged-in-symmetric-3}.
The degree~$k$ terms of this form occur when $i+j -2= k$, excluding the cases $i=2$ or $j=2$ since those terms contain a factor of $\bv_k$ and thus were accounted for in \cref{eq:kth-terms-sofar-2}.
Following the same approach used to simplify \cref{eq:vi-G-terms-1}, these terms can be written as
\begin{align}
     & \frac{1}{8} ij~\Vec \left[ \bV_i^\top \bB \bB^\top \bV_j \right]^\top {\kronF{\bx}{k}}. \label{eq:B-terms}
\end{align}

Lastly, using \cref{ID:nonsymmetric-quadraticForm}, an arbitrary $k$th-order term from \cref{eq:Hinfty-Past-HJB-plugged-in-symmetric-4} containing $\Hx_p$ can be written as
\begin{align}
    -\frac{\eta}{2} {\kronF{\bx}{p}}^\top \Hx_p^\top \Hx_q \kronF{\bx}{q} & = -\frac{\eta}{2} \Vec[\Hx_p^\top \Hx_q]^\top \kronF{\bx}{k} \label{eq:H-terms}
\end{align}
with $p+q=k$.
The terms \cref{eq:N-terms,eq:G-terms,eq:B-terms,eq:H-terms} represent single terms of degree~$k$ containing the contributions of individual $\bF_p$, $\bG_p$, and $\bH_p$ coefficients from the dynamics.
To collect all of the terms of degree~$k$ and explicitly write what was labelled as ``(everything else)'' in \cref{eq:kth-terms-sofar-2}, we need to introduce summations over the indices $p$ \& $i$ from \cref{eq:N-terms}, $p$,$q$,$i$,$j$, \& $o$ from \cref{eq:G-terms}, and $p$ \& $q$ from \cref{eq:H-terms}.
The collection of $k$th-order terms in the HJB equation \cref{eq:Hinfty-Past-HJB-plugged-in-symmetric} for $3 \leq k \leq d$ can then be written as
\small
\begin{align}%\label{eq:kth-terms-sofar-2}
     & 0 = \frac{1}{2} \bv_k^\top \cL_k(\bA +  \bB \bB^\top \bV_2 ) \kronF{\bx}{k}
    +  \frac{1}{2} \sum_{\mathclap{\substack{i,p\geq 2                                                                                           \\       i + p = k+1}}} \bv_i^\top \cL_i(\bF_p) \kronF{\bx}{k} \\
     & + \frac{1}{8} \sum_{\substack{i,j>2                                                                                                       \\ i+j=k+2}} \!\!\!\!ij~\Vec[\bV_i^\top \bB \bB^\top \bV_j]^\top \kronF{\bx}{k} \nonumber\\
     & + \frac{1}{8} \sum_{o=1}^{2\ell}\left( \sum_{\substack{p,q \geq 0                                                                         \\p+q=o}} \left(  \sum_{\substack{i,j\geq 2 \\       i+j=k-o+2}} \!\!\!\!ij~\Vec \Biggl[ \left(\bI_{n^p} \otimes \Vec\left[ \bI_m\right]^\top\right) \times  \right.\right. \nonumber\\
     & \hspace{0.25cm} \left(\Vec\left[\G_q^\top \bV_j \right]^\top \otimes \left(\G_p^\top \bV_i \otimes \bI_m \right) \right) \times \nonumber \\
     & \hspace{0.25cm} \left.\left.\bP^\top
        \left( \bI_{n^{k-p}} \otimes \Vec\left[ \bI_m\right] \right)\Biggr]^\top
    \vphantom{\sum_{\substack{p,q \geq 0                                                                                                         \\p+q=o}}} \right)\right){\kronF{\bx}{k}} -\frac{\eta}{2} \sum_{\mathclap{\substack{p,q\geq 1 \\ p + q = k}}} \Vec[\Hx_p^\top \Hx_q]^\top \kronF{\bx}{k}. \nonumber
\end{align}
\normalsize

Requiring this to hold for all $\bx$, we pull out
the factor of ${\kronF{\bx}{k}}$ from every term, multiply by two, and transpose the entire equation to obtain the linear system \cref{eq:LinSysForVk-Poly} to solve for the unknown coefficient $\mathbf{\tv}_k$. \hspace*{\fill}~\QED\par\endtrivlist\unskip

The proof for \cref{thm:wiPoly} mirrors the proof for \cref{thm:viPoly}; the only difference is that some of the coefficients in the HJB PDE \cref{eq:Hinfty-Future-HJB} are interchanged relative to those in \cref{eq:Hinfty-Past-HJB}.

\begin{remark}
    Equation \cref{eq:kth-terms-sofar-2} has a unique \textit{symmetric} solution, but there are many non-symmetric $\mathbf{\tv}_k$ that satisfy the equation as well.
    To give an analogy, observe that $\bx^\top \bP \bx = \bx^\top \bQ \bx$ for all $\bx$ does not imply $\bP = \bQ$; however, the symmetrizations of $\bP$ and $\bQ$ are equivalent.
    Thus, once the solution $\mathbf{\tv}_k$ is computed, we impose a symmetrization step to ensure that $\bv_k$ is the unique symmetric solution to \cref{eq:kth-terms-sofar-2}.
\end{remark}
\begin{remark}
    In \cite{Kramer2024}, the only nonlinear term included in the analysis is $\bF_2$, which eliminates everything from \cref{eq:vi-G-terms-1} to \cref{eq:H-terms} except for \cref{eq:B-terms}.
    The contribution of the present work is the ability to include all of the $\bF_p$, $\bG_p$, and $\bH_p$.
    The symmetry provided by \cref{thm:Past-energy-deriv-compact} is the key insight which enables compactly expressing all of the combinations of terms to be included in the right-hand-side of \cref{eq:LinSysForVk-Poly}.
\end{remark}

\begin{theorem}
    Let $\gamma > \gamma_0 \geq 0$ and $\eta = 1-\gamma^{-2}$, as in \cref{thm:viPoly,thm:wiPoly}.
    Then the equations \cref{eq:LinSysForVk-Poly,eq:LinSysForWk-Poly} have unique solutions.
\end{theorem}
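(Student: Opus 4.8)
The plan is to observe that \cref{eq:LinSysForVk-Poly,eq:LinSysForWk-Poly} are \emph{square} linear systems: since $\bA \in \real^{n\times n}$, their coefficient matrices $\cL_{k}(\bA + \bB\bB^\top\bV_2)^\top$ and $\cL_{k}(\bA - \eta\bB\bB^\top\bW_2)^\top$ (identified in \cref{thm:LHS}) lie in $\real^{n^k \times n^k}$. Hence each system has a unique solution if and only if its coefficient matrix is nonsingular, i.e.\ if and only if $0$ is not an eigenvalue. Because a matrix and its transpose share a spectrum, I would instead analyze $\cL_{k}(\bM)$ directly, with $\bM = \bA + \bB\bB^\top\bV_2$ in the past case and $\bM = \bA - \eta\bB\bB^\top\bW_2$ in the future case.

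The main tool is the spectral identity for the $k$-way Lyapunov matrix (a $k$-fold Kronecker sum): the eigenvalues of $\cL_{k}(\bM)$ are precisely the $k$-fold sums $\mu_{j_1} + \cdots + \mu_{j_k}$, where $\mu_1,\dots,\mu_n$ are the eigenvalues of $\bM$. I would prove this by taking a Schur decomposition $\bM = \bU\bT\bU^{*}$ with $\bU$ unitary and $\bT$ upper triangular, applying the mixed-product rule \cref{ID:T2.4} to every summand of \cref{eq:kWayLyapunov} to get $\cL_{k}(\bM) = \kronF{\bU}{k}\,\cL_{k}(\bT)\,(\kronF{\bU}{k})^{*}$, and noting that $\cL_{k}(\bT)$, being a sum of Kronecker products of upper triangular matrices, is upper triangular with diagonal entry $\sum_{i=1}^{k}\mu_{j_i}$ in position $(j_1,\dots,j_k)$. (Alternatively, this is a classical fact about Kronecker sums and could simply be cited.)

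Next I would invoke $\cH_\infty$ algebraic Riccati theory: the hypothesis $\gamma > \gamma_0$ is exactly the condition under which \cref{eq:V2-2,eq:W2-2} admit the stabilizing symmetric positive semidefinite solutions $\bV_2$, $\bW_2$ of \cref{thm:viPoly,thm:wiPoly}, equivalently the solutions for which the closed-loop linearizations in the asymptotic-stability hypotheses of \cref{sec:NLBT-background} hold. Linearizing those conditions at the origin shows that $-(\bA + \bB\bB^\top\bV_2)$ and $\bA - \eta\bB\bB^\top\bW_2$ are Hurwitz; hence every eigenvalue of $\bA + \bB\bB^\top\bV_2$ has strictly positive real part and every eigenvalue of $\bA - \eta\bB\bB^\top\bW_2$ has strictly negative real part. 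Combining this with the previous paragraph, every eigenvalue of $\cL_{k}(\bA + \bB\bB^\top\bV_2)$ is a sum of $k$ numbers with positive real part and is therefore nonzero, and likewise every eigenvalue of $\cL_{k}(\bA - \eta\bB\bB^\top\bW_2)$ has negative real part; so both $k$-way Lyapunov matrices, and their transposes, are invertible, and the linear systems have unique solutions.

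The routine parts are the Schur-triangularization computation and the observation that the systems are square. I expect the main obstacle to be pinning down the \emph{signs} of the spectra of $\bA + \bB\bB^\top\bV_2$ and $\bA - \eta\bB\bB^\top\bW_2$ from the $\cH_\infty$ theory --- the asymmetry that one matrix is anti-Hurwitz while the other is Hurwitz --- and checking that this persists across the regimes $\gamma>1$ (so $\eta>0$) and $0<\gamma<1$ (so $\eta<0$), where the future-energy problem switches from a minimization to a maximization. Once the correct stabilizing-solution statement is secured, the rest is immediate.
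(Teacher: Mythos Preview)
Your proposal is correct and follows essentially the same route as the paper: the paper also reduces to showing that $\cL_{k}(\bA+\bB\bB^\top\bV_2)^\top$ and $\cL_{k}(\bA-\eta\bB\bB^\top\bW_2)^\top$ are nonsingular by appealing to the closed-loop stability guaranteed by the $\cH_\infty$ hypotheses, and then cites \cite{Chen2019} for the Kronecker-sum spectral fact that you instead prove directly via Schur triangularization. Your observation about the sign asymmetry---that $\bA+\bB\bB^\top\bV_2$ is anti-Hurwitz while $\bA-\eta\bB\bB^\top\bW_2$ is Hurwitz---is in fact sharper than the paper's phrasing, which calls both ``asymptotically stable''; either way the conclusion (no zero eigenvalue of $\cL_k$) is the same.
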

\begin{proof}
    Under the assumptions of \cref{thm:viPoly,thm:wiPoly}, which are the assumptions of \cite[Thm. 5.2]{Scherpen1996},
    $\left(\bA + \bB \bB^\top  \bV_2 \right)$ and $\left(\bA - \eta \bB \bB^\top  \bW_2\right)$ are asymptotically stable, and hence nonsingular.
    This implies that $\cL_{k} \left(\bA + \bB \bB^\top  \bV_2 \right)^\top$ and $\cL_{k} \left(\bA - \eta \bB \bB^\top  \bW_2 \right)^\top$ are invertible \cite{Chen2019},
    and the linear systems \cref{eq:LinSysForVk-Poly,eq:LinSysForWk-Poly} have unique solutions.
\end{proof}

\cref{alg:alg1} summarizes the process for computing energy function approximations using \cref{thm:viPoly,thm:wiPoly}.
\begin{algorithm}[h!]
    \caption{Computing Taylor approximations to the $\cH_\infty$ balancing past and future energy functions $\cE_\gamma^-(\bx)$ and $\cE_\gamma^+(\bx)$}\label{alg:alg1}
    \begin{algorithmic}[1]
        \Require System matrices $\bA$, $\bB$, $\bC$, $\{\bF_p\}_{p=2}^\ell$, $\{\bG_p\}_{p=2}^\ell$, $\{\bH_p\}_{p=2}^\ell$; desired approximation degree $d$; $\cH_\infty$ gain parameter $\eta \coloneqq 1-\gamma^{-2}$.
        \Ensure Polynomial coefficients $\{\bv_i\}_{i=2}^d$ and $\{\bw_i\}_{i=2}^d$
        \State Solve the AREs \cref{eq:V2-2,eq:W2-2} for $\bv_2 = \Vec\left[\bV_2\right]$, $\bw_2 = \Vec\left[\bW_2\right]$.
        \For{$k=3$ to $d$}
        \State Form the linear systems \cref{eq:LinSysForVk-Poly,eq:LinSysForWk-Poly};
        \State solve \cref{eq:LinSysForVk-Poly,eq:LinSysForWk-Poly} for $\tilde\bv_k$ and $\tilde\bw_k$;
        \State symmetrize $\tilde\bv_k$ and $\tilde\bw_k$ to obtain $\bv_k$ and $\bw_k$.
        \EndFor
    \end{algorithmic}
\end{algorithm}
\subsection{Computational Complexity Analysis}\label{sec:complexity}
Using the floating point operation (flop) counts for standard BLAS operations, we evaluate the computational complexity of forming and solving the linear system \cref{eq:LinSysForVk-Poly}.
\begin{proposition}
    Consider a nonlinear dynamical system with degree $\ell$ polynomial structure \cref{eq:FOM-Poly} with state dimension $n$, $m$ inputs, $p$ outputs.
    The cost of computing degree~$d$ approximations to the past and future energy functions with \cref{thm:viPoly,thm:wiPoly} is $O(dn^{d+1})$.
\end{proposition}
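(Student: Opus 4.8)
The plan is to bound, separately, the one-time setup cost of \cref{alg:alg1} and the cost of its $k$th iteration, and then sum the latter over $k = 3, \dots, d$. The setup is the solution of the two $\cH_\infty$ algebraic Riccati equations \cref{eq:V2-2,eq:W2-2} together with a Schur factorization of the $n \times n$ generators $\bA + \bB\bB^\top\bV_2$ and $\bA - \eta\bB\bB^\top\bW_2$; each costs $O(n^3)$ and is performed once, so it is dominated by the first loop iteration as soon as $d \geq 3$. For the $k$th iteration I would account for three pieces: solving the linear system \cref{eq:LinSysForVk-Poly} (and its analogue \cref{eq:LinSysForWk-Poly}), assembling the right-hand side, and symmetrizing $\tilde\bv_k$ and $\tilde\bw_k$.

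For the linear solve, the coefficient matrix is the $k$-way Lyapunov matrix $\cL_k(\bA + \bB\bB^\top\bV_2)^\top$ with $n^k$ unknowns; reusing the Schur form from the setup, the structured back-substitution solver of \cite{Chen2019,Borggaard2021} handles this in $O(k\,n^{k+1})$ flops. For the right-hand side, I would classify its summands into the four families visible in \cref{eq:LinSysForVk-Poly}: the $\cL_i(\bF_p)^\top\bv_i$ terms, the $\Vec[\bV_i^\top\bB\bB^\top\bV_j]$ terms, the $\Vec[\Hx_p^\top\Hx_q]$ terms, and the nested Kronecker expressions built from $\G_p$ and $\G_q$. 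Each family is handled by the same principle: every constituent matrix is either a block of the form $\bI \otimes (\cdot) \otimes \bI$ (a tensor mode-contraction, costing (output size) $\times$ (contracted dimension) flops when applied to a reshaped vector), a perfect-shuffle permutation (applied at cost equal to the vector length), or one of the small, $n$- and $d$-independent factors $\bF_p, \G_p, \Hx_p, \bB, \bC$. Reading off the index constraints in \cref{eq:LinSysForVk-Poly} — e.g.\ $i + p = k+1$, $i + j = k+2$, $i + j + o = k + 2$ — one checks that in every family the largest intermediate tensor has $O(n^k)$ entries and every contracted dimension is $O(n)$ (absorbing the fixed counts $m$, $p$, $\ell$ into the constant), so each summand costs $O(n^{k+1})$; since the nested sums contribute $O(k)$ summands (for fixed $\ell$), the right-hand side at degree $k$ costs $O(k\,n^{k+1})$. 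Symmetrization of a vector in $\real^{n^k}$ is a fixed (in $n$) sequence of perfect-shuffle permutations and averages, costing $O(n^k)$ up to a $k$-dependent factor, and is thus not the bottleneck.

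Putting the pieces together, the $k$th iteration costs $O(k\,n^{k+1})$, and computing both the past and future coefficients only doubles this. The total work is therefore $\sum_{k=3}^{d} O(k\,n^{k+1})$, which for $n \geq 2$ is a geometric-type sum dominated by its last term and equals $O(d\,n^{d+1})$; since the $O(n^3)$ setup is absorbed into this bound, the claim follows.

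The step I expect to be the main obstacle is the bookkeeping for the right-hand side, and in particular the $\G_p$/$\G_q$ family: one must exhibit, for each such summand, an order of evaluation in which no intermediate object ever exceeds $O(n^{k+1})$ entries and no single matrix–vector product exceeds $O(n^{k+1})$ flops. This amounts to reading off from the explicit Kronecker formula which factor to contract first, so that the repeated-Kronecker index blocks collapse without ever materializing an $n^k \times n^k$ matrix, and confirming that the shuffle matrices act purely as $O(n^k)$ index reorderings. The remaining ingredients — the $O(n^3)$ setup, the cited structured linear solve, the elementary term counts from the index constraints, and the final geometric summation — are routine.
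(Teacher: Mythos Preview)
Your proposal is correct and follows essentially the same approach as the paper: both decompose the per-degree cost into the linear solve (handled in $O(kn^{k+1})$ via the structured $k$-way Lyapunov solver of \cite{Chen2019}) and the right-hand-side assembly (split into the same four families of terms), then let the highest degree dominate. The paper gives somewhat sharper per-family bounds (e.g.\ $O(m^4 n^k)$ for the $\G_p$/$\G_q$ terms, $O(mn^k)$ for the $\bB$ terms) and is explicit about the assumptions $n \gg km$, $n \gg k\ell^2 m^4$, whereas you uniformly cap each summand by $O(n^{k+1})$ — coarser, but sufficient for the stated bound.
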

\begin{proof}
    Here we show the cost of forming and solving the linear system \cref{eq:LinSysForVk-Poly}; the cost for the linear system \cref{eq:LinSysForWk-Poly} is identical.
    First, we consider the flops required to form the terms on the right-hand side of the linear system \cref{eq:LinSysForVk-Poly}.
    Consider the first set of terms in \cref{eq:LinSysForVk-Poly},
    \begin{align}\label{eq:Fterms}
        -\sum_{\mathclap{\substack{i,p\geq 2 \\ i + p = k+1}}} \cL_i(\bF_p)^\top \bv_i.
    \end{align}
    The matrix $\cL_i(\bF_p)$ has dimension $(n^i \times n^k)$,
    whereas vector $\bv_i$ is $(n^i \times 1)$,
    so the cost of evaluating the Lyapunov product $\cL_i(\bF_p)^\top \bv_i$ using naive matrix-vector multiplication is $O(n^{k+i})$ using level-2 BLAS operations.
    The dominant cost in the sum is therefore the term with $i=k-1$, for a total cost of $O(n^{2k-1})$.
    Instead, we exploit the structure of the \emph{i-way Lyapunov matrix} to form these terms more efficiently.
    A term from \cref{eq:Fterms} can be expanded as
    \begin{align*}
        \cL_i(\bF_p)^\top \bv_i = (\bF_p^\top \otimes \bI_{n^{i-1}}) \bv_i + (\bI_n \otimes \bF_p^\top \otimes \bI_{n^{i-2}}) \bv_i + \dots
    \end{align*}
    All of the neglected terms are computed similarly to the first term with an appropriate permutation/reshaping, so the total cost is $i$ times the cost of computing the first term.
    Using \cref{ID:T3.4}, we rewrite the first term in the sum as
    $(\bF_p^\top \otimes \bI_{n^{i-1}}) \bv_i  = \Vec[\bV_i^\top \bF_p]$,
    which is now matrix multiplication of $(n^{i-1} \times n)$ and $(n \times n^p)$ matrices, which has a cost of $O(n^{i+p})$ using level-3 BLAS operations.
    Since $i+p = k+1$, this is equivalent to $O(n^{k+1})$.
    Performing this operation $i$ times for the remaining terms, the total cost of evaluating $\cL_i(\bF_p)^\top \bv_i$ this way is $O(i n^{k+1})$.
    The dominant cost occurs for the case $i = k - 1$, so the total cost to form the set of terms \cref{eq:Fterms} is $O(k n^{k+1})$.

    Next consider the terms
    \begin{align}\label{eq:Bterms}
        -\frac{1}{4}\sum_{\mathclap{\substack{i,j>2 \\ i+j=k+2}}} ij~\Vec[\bV_i^\top \bB \bB^\top \bV_j].
    \end{align}
    Since products like $\bV_i^\top \bB$ appear repeatedly, we can store them in memory to avoid repeatedly forming them; however, the dominant cost comes from multiplying these stored quantities together.
    Treating $\bV_i^\top \bB$ as an $(n^{i-1} \times m)$ matrix and $\bB^\top \bV_j$ as an $(m \times n^{j-1})$ matrix, the multiplication $\bV_i^\top \bB \bB^\top \bV_j$ costs $O(m n^{i+j-2})$ using level-3 BLAS operations.
    Since $i+j = k+2$, this is equal to $O(m n^k)$.
    We form $k-2$ of these terms in the sum, so the overall cost is $O(k m n^k)$.

    Next consider the sum
    \begin{align}
        \eta\sum_{\mathclap{\substack{p,q\geq 1 \\ p + q = k}}} \Vec[\Hx_p^\top \Hx_q].
    \end{align}
    The matrix product $\Hx_p^\top \Hx_q$ costs $O(n^k)$ using level-3 BLAS operations, and we form $k-1$ of these terms in the sum for a total cost of $O(kn^k)$.

    Finally, consider the sums
    \begin{equation}\label{eq:Gterms}
        \begin{split}
             & - \frac{1}{4} \sum_{o=1}^{2\ell}\left( \sum_{\substack{p,q \geq 0                                                             \\p+q=o}} \left(  \sum_{\substack{i,j\geq 2 \\       i+j=k-o+2}} \!\!\!\!\!\!ij~\Vec \Biggl[ \left(\bI_{n^p} \otimes \Vec\left[ \bI_m\right]^\top\right) \times \right.\right.\\
             & \hspace{.4cm} \left(\Vec\left[\G_q^\top \bV_j \right]^\top \otimes \left(\G_p^\top \bV_i \otimes \bI_m \right) \right) \times \\
             & \hspace{.4cm} \left.\left. \left(\bI_{n^{j-1}} \otimes \perm{n^{i-1}}{n^q m} \otimes \bI_m \right)
                    \left( \bI_{n^{k-p}} \otimes \Vec\left[ \bI_m\right] \right)\Biggr]
            \vphantom{\sum_{\substack{p,q \geq 0                                                                                             \\p+q=o}}} \right)\right).
        \end{split}
    \end{equation}

    Here the situation is more subtle, as the products $(\bI_{n^p} \otimes \Vec\left[\bI_m\right]^\top)$, $\left(\bI_{n^{j-1}} \otimes \perm{n^{i-1}}{n^q m} \otimes \bI_m \right)$, and $(\bI_{n^{k-p}} \otimes \Vec\left[\bI_m\right])$ are operations performed on sparse binary matrices consisting only of integer 1s and 0s; therefore, no floating point operations are performed.
    These are primarily memory operations as opposed to flops.
    Furthermore, many programming languages, including \textsc{Matlab}, form Kronecker products of sparse binary matrices very efficiently.
    The dominant cost in these terms is therefore from operations involving $\G_q^\top \bV_j$.
    Since products like $\G_q^\top \bV_j$ are used repeatedly, we store them in memory to avoid repeatedly forming them.
    The vector $\Vec\left[\G_q^\top \bV_j \right]^\top$ has dimension
    $(1 \times m n^{q+j-1})$.
    The matrix $\left(\G_p^\top \bV_i \otimes \bI_m \right)$ has dimension
    $(m^2n^p \times  mn^{i-1})$.
    So the Kronecker product $\Vec\left[\G_q^\top \bV_j \right]^\top \otimes \left(\G_p^\top \bV_i \otimes \bI_m \right)$
    costs $O(m^4 n^{p+q+i+j-2})$, which is $O(m^4 n^{k})$.

    We next consider the cost of summation of all terms in \cref{eq:Gterms}.
    There are $(k-o-1)$ terms in each innermost sum, which is at most $k$ terms.
    The middle sum is $(o+1)$ terms, which is at most $2\ell+1$ terms.
    The outermost sum is $2\ell$ terms.
    So there are in total $O(2k \ell (2\ell+1))$ terms to form,
    and the total cost is $O(k \ell^2 m^4 n^{k})$.

    To summarize, the \emph{total cost of forming} the right-hand sides of the linear systems \cref{eq:LinSysForVk-Poly,eq:LinSysForWk-Poly} is $O(kn^{k+1}) + O(kmn^k) + O(kn^k) + O(k \ell^2 m^4 n^{k})$; assuming $n \gg km$ and $n \gg k \ell^2 m^4$, the cost of forming the right-hands sides is $O(kn^{k+1})$.

    To \emph{solve} the linear systems, a naive approach would require solving a linear system of dimension $n^k$ for the $k$th coefficient, which has a cost of $O(n^{3k})$ using a direct method.
    Instead, we leverage the \emph{k-way Lyapunov matrix structure} of the left-hand sides and use the efficient solver introduced in \cite{Chen2019}, which has a computational complexity of $O(kn^{k+1})$.
    Overall then, the cost of \emph{forming and solving} the linear systems \cref{eq:LinSysForVk-Poly,eq:LinSysForWk-Poly} for the coefficient $\bv_k$ is $O(kn^{k+1})$, as opposed to a naive approach which costs $O(n^{3k})$.
    Since the highest order coefficient $\bv_d$ is the most expensive to compute, the overall cost is $O(dn^{d+1})$.
\end{proof}

%%%%%%%%%%%%%%%%%%%%%%%%%%%%%%%%%%%%%%%%%%%%%%%%%%%%%%%%%%%%%%%%%%%%%%%%%%%%%%%%%%%%%%%%%%%%%%%%%
%%%%%%%%%%%%%%%%%%%%%%%%%%          Toy Examples           %%%%%%%%%%%%%%%%%%%%%%%%%%%%%%%%%%%%%%
%%%%%%%%%%%%%%%%%%%%%%%%%%%%%%%%%%%%%%%%%%%%%%%%%%%%%%%%%%%%%%%%%%%%%%%%%%%%%%%%%%%%%%%%%%%%%%%%%
\section{Illustrative Examples}\label{sec:results}
In \cref{sec:example1}, we examine a simple 1D example for which we know the true solutions to the energy functions.
This allows us to compare the accuracy of the computed energy functions to the true solutions.
In \cref{sec:example2}, we consider a slightly more complex 2D problem for which the energy functions can be visualized as contour plots.
The \texttt{cnick1/NLbalancing} repository \cite{NLBalancing2023} provides the functions \texttt{approxPastEnergy()} and \texttt{approxFutureEnergy()} for computing energy functions using \cref{thm:viPoly,thm:wiPoly};
the script \texttt{examplesForPaper3} reproduces all of the results in this paper.

\subsection{1D Example}\label{sec:example1}
Consider the 1D polynomial model
\begin{align*}
    \dot{x} & = ax + nx^2 + bu +g_1xu + g_2x^2u, &
    y       & = cx,
\end{align*}
where $a$, $b$, $c$, $n$, $g_1$, and $g_2$ are scalars, as are the state $x(t)$, input $u(t)$, and output $y(t)$.
Example 1 in \cite{Kramer2024} is a simplified case of this model with $g_1$ and $g_2$ set to zero;
here they are nonzero, resulting in a polynomial input map that cannot be handled by \cite{Kramer2024}.
The past and future energy functions are computed analytically and then compared with our approximations of increasing polynomial degree.

Since this model has only a single state dimension, the HJB PDEs reduce to 1D algebraic quadratic equations for the derivatives of the energy functions $\rd \cE_\gamma^{-}(x)/\rd x$ and $\rd \cE_\gamma^{+}(x)/\rd x$.
It is therefore straightforward to obtain the true energy functions $\cE_\gamma^{-}(x)$ and $\cE_\gamma^{+}(x)$ via traditional numerical integration.
The true past and future energy function solutions $\cE_{\gamma,\text{true}}^{-}(x)$ and $\cE_{\gamma,\text{true}}^{+}(x)$ are plotted in \cref{fig:example1b,fig:example1a}, respectively, for $a = -2$, $n = 1$, $b = 2$, $g_1 = -0.2$, $g_2 = 0.2$, $c = 2$, and $\eta = 0.5$.
In addition to the ground-truth energy functions, we plot degree~2, 4, 6, and 8 approximations.
\begin{figure}[htb]
    \centering
    \begin{subfigure}[]{\columnwidth}
        \begin{tikzpicture}
            \begin{axis}[
                    width=\textwidth, height = 5cm,
                    xmin=-6, xmax=6,
                    ymin=-1.5, ymax=10,
                    xlabel=$x$,
                    ylabel=$\mathcal{E}_\gamma^-(x)$,
                    legend style={at={(0.675,1)},draw=none,font=\scriptsize\sffamily,cells={align=center}},
                    xlabel style={font=\small\sffamily},
                    ylabel style={font=\small\sffamily},
                ]
                \addplot
                [color=blue,mark=+,each nth point={5}, mark size = 2pt, only marks]
                table [x index=0, y index=1, col sep=space] {ex1_past_a.txt};
                \addlegendentry{ground-truth}
                \addplot
                [color=Set1-A, thick, dashed, dash pattern=on 4pt off 1pt on 1pt off 1pt]
                table [x index=0, y index=1, col sep=space] {ex1_past_2.txt};
                \addlegendentry{degree 2}
                \addplot
                [color=Set1-E, thick, dashed, dash pattern=on 4pt off 1pt on 1pt off 1pt on 1pt off 1pt]
                table [x index=0, y index=1, col sep=space] {ex1_past_4.txt};
                \addlegendentry{degree 4}
                \addplot
                [color=Set1-D, thick, dashed, dash pattern=on 4pt off 1pt on 1pt off 1pt on 1pt off 1pt on 1pt off 1pt]
                table [x index=0, y index=1, col sep=space] {ex1_past_6.txt};
                \addlegendentry{degree 6}
                \addplot
                [color=Set1-C, thick, dashed, dash pattern=on 4pt off 1pt on 1pt off 1pt on 1pt off 1pt on 1pt off 1pt on 1pt off 1pt]
                table [x index=0, y index=1, col sep=space] {ex1_past_8.txt};
                \addlegendentry{degree 8}
            \end{axis}
        \end{tikzpicture}
        % \vspace{-.5cm}
        \caption{Past energy function and its approximations.}
        \label{fig:example1b}
    \end{subfigure}
    % \hfill

    \begin{subfigure}[]{\columnwidth}
        % \vspace{.5cm}
        \begin{tikzpicture}
            \begin{axis}[
                    width=\textwidth, height = 5cm,
                    xmin=-6, xmax=6,
                    ymin=0, ymax=20,
                    xlabel=$x$,
                    ylabel=$\mathcal{E}_\gamma^+(x)$,
                    clip mode=individual, % draw later lines above earlier lines
                    legend style={at={(0.675,1)},draw=none,font=\scriptsize\sffamily,cells={align=center}},
                    xlabel style={font=\small\sffamily},
                    ylabel style={font=\small\sffamily},
                ]
                \addplot
                [color=blue,mark=+,each nth point={5}, mark size = 2pt, only marks]
                table [x index=0, y index=1, col sep=space] {ex1_future_a.txt};
                \addlegendentry{ground-truth}
                \addplot
                [color=Set1-A, thick, dashed, dash pattern=on 4pt off 1pt on 1pt off 1pt]
                table [x index=0, y index=1, col sep=space] {ex1_future_2.txt};
                \addlegendentry{degree 2}
                \addplot
                [color=Set1-E, thick, dashed, dash pattern=on 4pt off 1pt on 1pt off 1pt on 1pt off 1pt]
                table [x index=0, y index=1, col sep=space] {ex1_future_4.txt};
                \addlegendentry{degree 4}
                \addplot
                [color=Set1-D, thick, dashed, dash pattern=on 4pt off 1pt on 1pt off 1pt on 1pt off 1pt on 1pt off 1pt]
                table [x index=0, y index=1, col sep=space] {ex1_future_6.txt};
                \addlegendentry{degree 6}
                \addplot
                [color=Set1-C, thick, dashed, dash pattern=on 4pt off 1pt on 1pt off 1pt on 1pt off 1pt on 1pt off 1pt on 1pt off 1pt]
                table [x index=0, y index=1, col sep=space] {ex1_future_8.txt};
                \addlegendentry{degree 8}
            \end{axis}
        \end{tikzpicture}
        % \vspace{-.5cm}
        \caption{Future energy function and its approximations.}
        \label{fig:example1a}
    \end{subfigure}
    \caption{Past and future energy function approximations with $\eta = 1/2$ for Example 1.
        Higher-order polynomials are needed to approximate the non-quadratic energy functions inherent to the nonlinear system.}
    \label{fig:example1}
    % \hrulefill
    % \vspace*{4pt} % The spacer can be tweaked to stop underfull vboxes.
\end{figure}
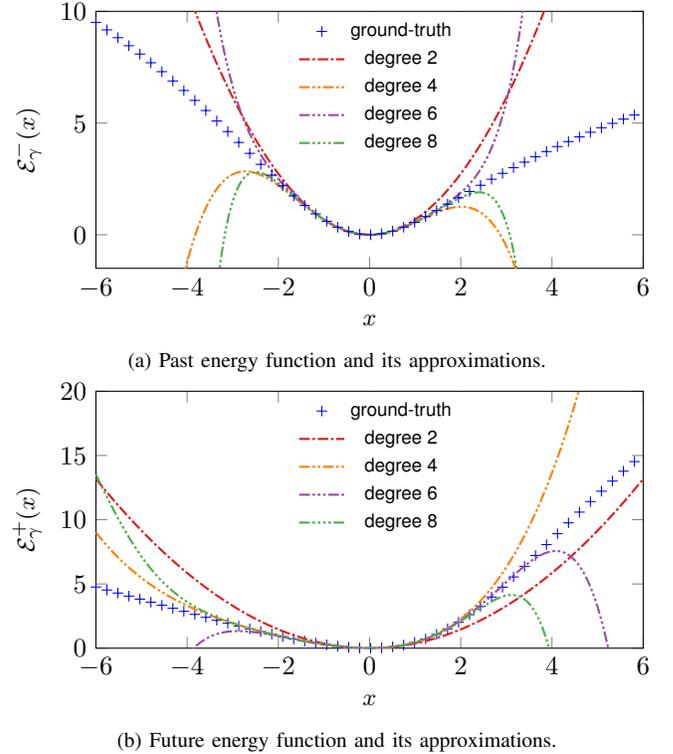

This example demonstrates that for even simple nonlinear systems, the energy functions are in general not quadratic, as is evident in \cref{fig:example1}.
This suggests that model reduction methods such as those based on algebraic Gramians \cite{Condon2005,Benner2024}, which compute quadratic approximations to the nonlinear energy functions, may fail to capture important features.

As is often the case with polynomial approximation, higher-degree approximations also tend to diverge more quickly beyond some local region of convergence.
To quantify the accuracy of the various degree approximations and their regions of convergence,
we introduce the remainder for the degree $d$ approximation $\cE_{\gamma}^+(x)$ to the true energy function $\cE_{\gamma,\text{true}}^+(x)$ as
\begin{align*}
    R^+_d(x) =  \cE_{\gamma,\text{true}}^+(x) - \cE_{\gamma}^+(x).
\end{align*}
The $L^\infty$-norm of the remainder $R^+_d(x)$ over the interval $-\chi$ to $\chi$ serves as an error metric.
In \cref{sfig:example1_regionOfAccuracy}, we vary $\chi$ from $0$~to~$6$ and select 250 evaluation points in this interval to show the errors for the various approximations to the past energy function $\cE_\gamma^+(x)$ ($\eta = 1/2$) for Example~1.
According to Taylor's theorem, there is a neighborhood within which the remainder tends to zero $R^+_d(x) \to 0$ as we continue to add higher-order terms to the approximation $d \to \infty$ \cite{Marsden1998a}; however, Taylor's theorem does not specify the size of this neighborhood.
\cref{sfig:example1_regionOfAccuracy} clearly demonstrates the implication of Taylor's theorem:
higher-order approximations \emph{are} more accurate locally, yet the region of convergence is not widened despite the additional terms in the approximations.
\pgfdeclarelayer{background}% determine background layer
\pgfsetlayers{background,main}% order of layers
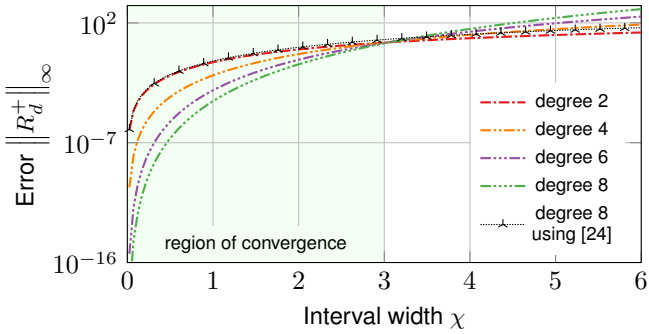
\begin{figure}[htb]
    \centering
    % \begin{subfigure}[]{\columnwidth}
    \begin{tikzpicture}[]
        \begin{semilogyaxis}[
                width = 0.95\columnwidth, height = 5cm,
                grid,
                xmin=0, xmax=6,
                ymin=1e-16, ymax=2e3,
                xlabel={Interval width $\chi$},
                ylabel={Error $\left\lVert R^+_d\right\rVert_\infty$},
                clip mode=individual, % draw later lines above earlier lines
                legend pos=south east,
                legend style={draw=none,cells={align=center},font=\scriptsize\sffamily},
                xlabel style={font=\small\sffamily},
                ylabel style={font=\small\sffamily},
                ylabel shift = -6 pt,
            ]
            \begin{pgfonlayer}{background}
                \fill[color=green!5] (axis cs:0,1e-16) rectangle (axis cs:3,2e3);
            \end{pgfonlayer}
            \addplot
            [color=Set1-A, thick, dashed, dash pattern=on 4pt off 1pt on 1pt off 1pt]
            table [x index=0, y index=1, col sep=&] {example1_regionOfAccuracy.dat};
            \addlegendentry{degree 2}
            \addplot
            [color=Set1-E, thick, dashed, dash pattern=on 4pt off 1pt on 1pt off 1pt on 1pt off 1pt]
            table [x index=0, y index=2, col sep=&] {example1_regionOfAccuracy.dat};
            \addlegendentry{degree 4}
            \addplot
            [color=Set1-D, thick, dashed, dash pattern=on 4pt off 1pt on 1pt off 1pt on 1pt off 1pt on 1pt off 1pt]
            table [x index=0, y index=3, col sep=&] {example1_regionOfAccuracy.dat};
            \addlegendentry{degree 6}
            \addplot
            [color=Set1-C, thick, dashed, dash pattern=on 4pt off 1pt on 1pt off 1pt on 1pt off 1pt on 1pt off 1pt on 1pt off 1pt]
            table [x index=0, y index=4, col sep=&] {example1_regionOfAccuracy.dat};
            \addlegendentry{degree 8}
            \addplot
            [color=black, thin, mark=Mercedes star,mark repeat=6, mark options=solid, dashed, dash pattern=on .5pt off .5pt on .5pt off .5pt]
            table [x index=0, y index=12, col sep=&] {example1_regionOfAccuracy.dat};
            \addlegendentry{degree 8 \\ using \cite{Kramer2024}}

            \node at (axis cs:1.5,1e-16) [above, fill=green!5] {\scriptsize\sffamily region of convergence};

        \end{semilogyaxis}
    \end{tikzpicture}

    \caption{
        Errors for the various approximations to the future energy function $\cE_\gamma^+(x)$ for Example~1 ($\eta = 1/2$) on intervals $(-\chi,\chi)$.
        Convergence with increasing polynomial degree occurs in a neighborhood of the origin,
        as predicted by Taylor's theorem.
        The degree~4 approximation including all of the terms in the dynamics is superior to the degree~8 approximation computed using \cite{Kramer2024}, which neglects terms in the dynamics.
    }
    \label{sfig:example1_regionOfAccuracy}
    % \hrulefill
    % \vspace*{4pt} % The spacer can be tweaked to stop underfull vboxes.
\end{figure}

\cref{sfig:example1_regionOfAccuracy} also contains the error for a degree 8 approximation computed with the algorithm in \cite{Kramer2024}.
Since the method therein assumes quadratic drift, linear inputs, and linear outputs, this approximation amounts to neglecting $g_1$ and $g_2$.
Interestingly, the degree~4 approximation including the full system dynamics is superior to the degree~8 approximations with dynamics neglecting $g_1$ and $g_2$;
in fact, the degree~8 approximation neglecting terms from the dynamics does not appear to be much more accurate than the quadratic approximation coming from linearizing the system.
These results indicate that, regarding local accuracy, including all of the information from the dynamics is more important than computing a higher-order energy function approximation.
In other words, locally, a lower-order approximation to the \emph{correct} energy function is better than a higher-order approximation to the \emph{wrong} energy function.

\subsection{2D Example}\label{sec:example2}
Consider the 2D quadratic-bilinear system from \cite{Kawano2017}:
\begin{align*}
    \dot{x}_1 & = -x_1 + x_2 - x_2^2 + u + 2 x_2 u, &
    \dot{x}_2 & = -x_2 + u,                           \\
    y         & = x_1.
\end{align*}

Since the state is two-dimensional, the computed energy functions can be visualized as contour plots;
\cref{sfig:example2_past,sfig:example2_future} show the open-loop controllability and observability energy functions computed with our method by setting $\eta=0$.
Quadratic functions in two dimensions have elliptical contours,
so the curvature and asymmetry present in these energy functions indicates that they are highly non-quadratic.
Similar to the 1D example, we observe that an algebraic Gramian-based approach would necessarily result in quadratic energy functions, so such approaches fail to capture the true behavior of these energy functions.
\begin{figure}[htb]
    \centering
    \begin{subfigure}{0.2375\textwidth}
        \includegraphics[height=5cm]{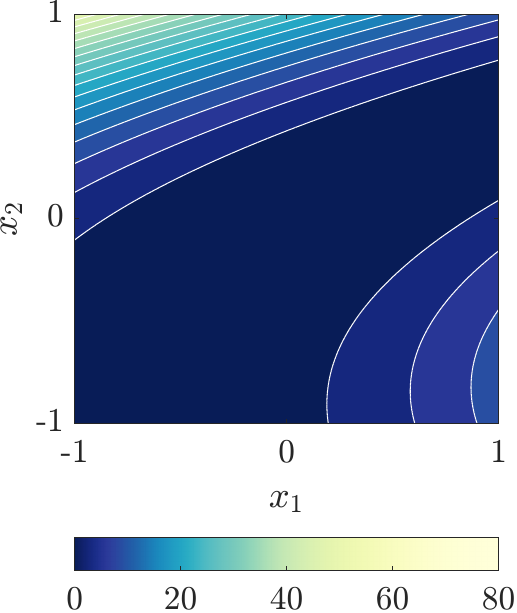}
        \caption{Past energy $\cE_\gamma^{-}(\bx)$.}
        \label{sfig:example2_past}
    \end{subfigure}
    \hfill
    \begin{subfigure}{0.2375\textwidth}
        \includegraphics[height=5cm]{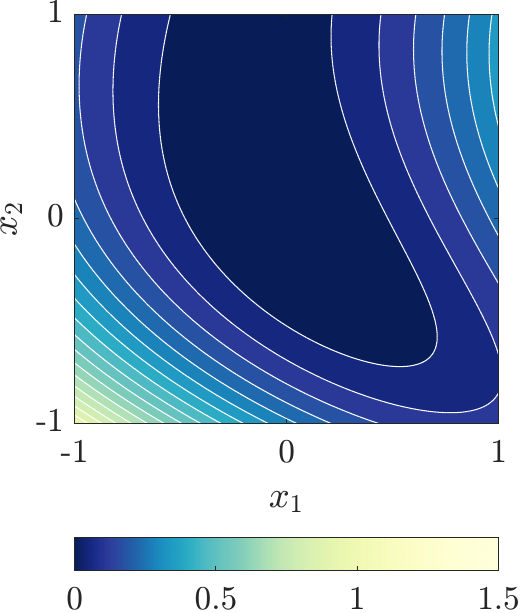}
        \caption{Future energy $\cE_\gamma^{+}(\bx)$.}
        \label{sfig:example2_future}
    \end{subfigure}

    \begin{subfigure}{0.2375\textwidth}
        \includegraphics[height=5cm]{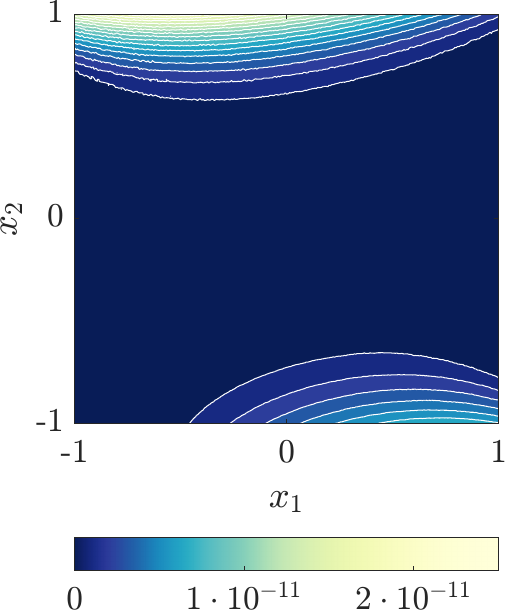}
        \caption{Past HJB residual $\text{RES}^-(\bx)$.}
        \label{sfig:example2_past_res}
    \end{subfigure}
    \hfill
    \begin{subfigure}{0.2375\textwidth}
        \includegraphics[height=5cm]{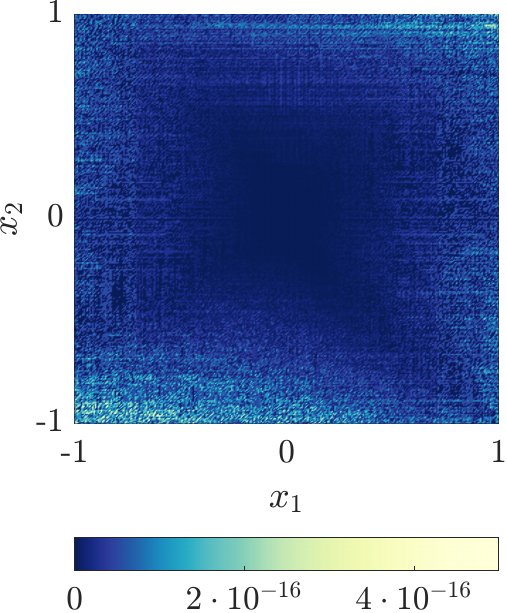}
        \caption{Future HJB residual $\text{RES}^+(\bx)$.}
        \label{sfig:example2_future_res}
    \end{subfigure}
    \caption{(a) Past energy function $\cE_\gamma^{-}(\bx)$ degree 4 approximation with $\eta=0$, (b) future energy function $\cE_\gamma^{+}(\bx)$ degree 4 approximation with $\eta=0$, (c) past energy function HJB residual $\text{RES}^-(\bx)$, (d) future energy function HJB residual $\text{RES}^+(\bx)$.}
    \label{fig:example2_energyFunctions}
    % \hrulefill
\end{figure}

Since the true analytical energy functions are not available to compare with in this example, we use the HJB residual to assess the accuracy of the approximations in \cref{fig:example2_energyFunctions}.
Based on \cref{eq:Hinfty-Past-HJB}, define the residual for the past HJB PDE as
\begin{equation*} %\label{eq:Hinfty-Past-HJB-RES}
    \begin{split}
        \text{RES}^{-}(\bx) & =  \Bigg|\frac{\partial \cE_\gamma^{-}(\bx)}{\partial \bx} \bf(\bx) + \frac{1}{2}  \frac{\partial \cE_\gamma^{-}(\bx)}{\partial \bx} \bg(\bx) \bg(\bx)^\top \frac{\partial^\top \cE_\gamma^{-}(\bx)}{\partial \bx} \\
                            & \qquad - \frac{\eta}{2}  \bh(\bx)^\top  \bh(\bx)\Bigg|.
    \end{split}
\end{equation*}
The residual for the future HJB PDE is defined similarly based on \cref{eq:Hinfty-Future-HJB}.
The HJB residual has been used as an error metric in optimization-based approaches in the literature, see e.g. \cite{Borovykh2022}.
The HJB residuals corresponding to the energy functions shown in \cref{sfig:example2_past,sfig:example2_future} are shown in \cref{sfig:example2_past_res,sfig:example2_future_res}.
Note that the residual is zero when the HJB PDE is satisfied.

For this
model, the degree~4 solutions are sufficient to
accurately approximate the energy functions on the domain from -1 to 1.
The future HJB PDE is satisfied nearly to machine precision.
The past HJB residual is very small throughout most the domain of interest, and it only grows
at the edges of the domain where the energy function in \cref{sfig:example2_past} also grows steeply.
These regions appear to be more difficult for the degree~4 polynomial to approximate.
Nonetheless, the energy function values are on the order of $O(10)$ whereas the HJB residual errors are on the order of $O(10^{-11})$.

The previous results serve to illustrate the expected behavior of solutions computed our proposed method.
Ultimately, the method computes a Taylor approximation to the energy functions, so the classical results from Taylor's theorem apply: the solutions have guaranteed convergence in a neighborhood of the origin, but in general care must be taken to check that the region of interest is included in the region of convergence.
Outside of the region of convergence, polynomial approximations quickly diverge to $\pm \infty$.

%%%%%%%%%%%%%%%%%%%%%%%%%%%%%%%%%%%%%%%%%%%%%%%%%%%%%%%%%%%%%%%%%%%%%%%%%%%%%%%%%%%%%%%%%%%%%%%%%
%%%%%%%%%%%%%%%%%%%%%%%          Finite Element Examples          %%%%%%%%%%%%%%%%%%%%%%%%%%%%%%%
%%%%%%%%%%%%%%%%%%%%%%%%%%%%%%%%%%%%%%%%%%%%%%%%%%%%%%%%%%%%%%%%%%%%%%%%%%%%%%%%%%%%%%%%%%%%%%%%%
\section{Numerical Results for a New High-Dimensional Benchmark Problem}\label{sec:fem}

In this section, we seek to demonstrate the
scalability and convergence of the proposed algorithms
on an Euler-Bernoulli cantilever beam finite element model with von Kármán geometric nonlinearity (see \cref{fig:beam}), based on an example from Reddy~\cite{Reddy2004}.
We emphasize that while Al'brekht's method has been used often in the literature, the contribution in the present work lies in the ability to apply the method to significantly higher dimensional systems, such as those which may require model reduction via nonlinear BT.
The results are obtained on a
Linux workstation with an Intel Xeon W-3175X CPU, 256 GB RAM, and \textsc{Matlab} 2021a.

\begin{figure}[htb]
    \centering
    \includegraphics[width=\columnwidth]{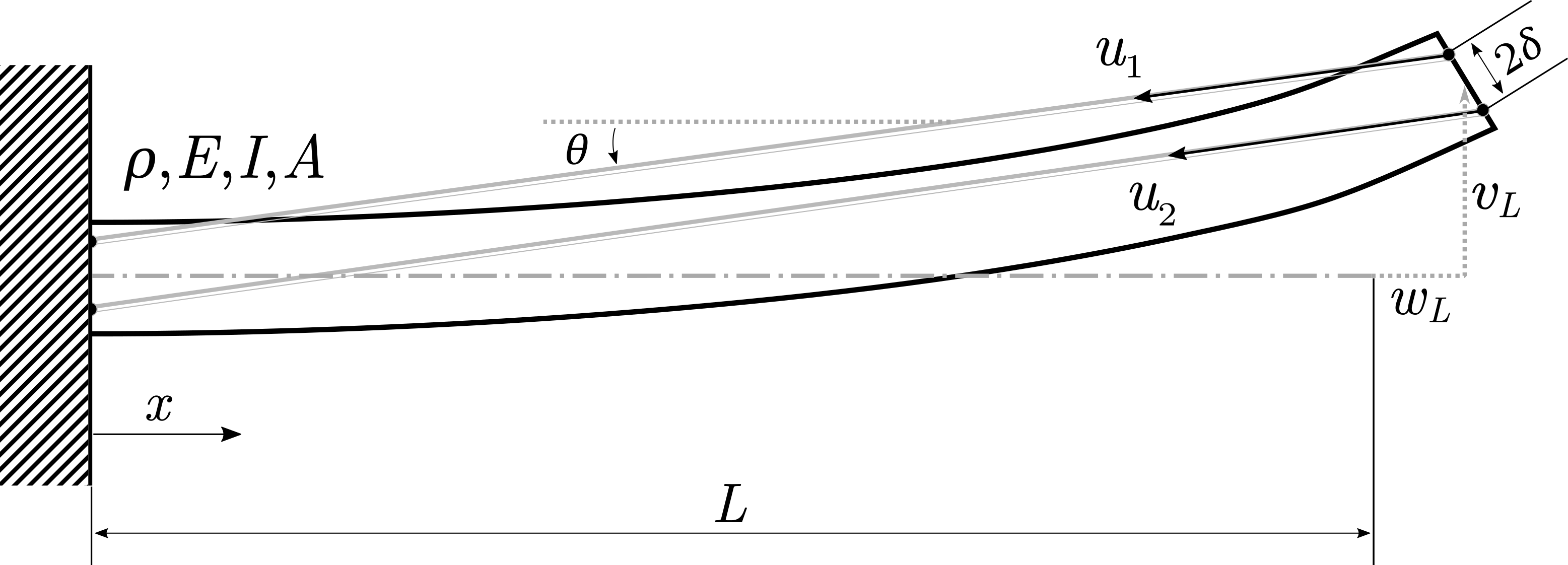}
    \caption{A cable-actuated cantilever beam.
        The cables are the solid light gray lines with a spacing of $2\delta$, through-which the control inputs $u_1$ and $u_2$ are applied.
        When deformed, the beam deviates from the dot-dashed centerline by $w(x,t)$ and $v(x,t)$ in the horizontal and vertical directions, respectively.
        The displacement of the tip, given by $w_L \coloneqq w(L,t)$ and $v_L \coloneqq v(L,t)$, determines the cable angle, $\theta$, as the beam deforms.}
    \label{fig:beam}
\end{figure}

\subsection{Model Derivation}
The basic example from Reddy~\cite{Reddy2004} is modified by adding cable actuators to construct an input-output control model of the form \cref{eq:FOM-Poly} with scalable state dimension $n$.
The cable actuation gives a state-dependent control-affine forcing term which can be approximated with arbitrarily high polynomial degree $\ell$.
The two tendon-like cables are attached a distance $\delta$ from the centerline (see \cref{fig:beam}).
When actuated together, the cables apply forcing in the horizontal direction, whereas when actuated independently, they produce a bending moment which can move the beam in the vertical direction.

The governing equations for the nonlinear Euler-Bernoulli beam are
\begin{align*}
    0 & = \rho A \frac{\partial^2 w}{\partial t^2}
    - \frac{\partial N_{xx}}{\partial x}    ,                                                                                        \\
    0 & = \rho A \frac{\partial^2 v}{\partial t^2} - \frac{\partial }{\partial x} \left(N_{xx} \frac{\partial v}{\partial x} \right)
    + \frac{\partial^2 M_{xx}}{\partial x^2},
\end{align*}
where
$v(x,t)$
and
$w(x,t)$
represent the beam's
transverse
and
longitudinal
deflections, respectively, as functions of position along the beam $x$ and time $t$.
The quantities $N_{xx}$ and $M_{xx}$ are the axial force and bending moment; we employ von Kármán geometric nonlinearity and write
\begin{align*}
    N_{xx} & = E A \left[\frac{\partial w}{\partial x}  + \frac{1}{2}\left(\frac{\partial v}{\partial x} \right)^2 \right] , &
    M_{xx} & = E I \frac{\partial^2 v}{\partial x^2}.
\end{align*}
The physical parameters are the
density $\rho$,
elastic modulus $E$,
second moment of area $I$,
and
cross-sectional area $A$.
The model is nonlinear due to the inclusion of the quadratic strain component in $N_{xx}$, without-which the linear Euler-Bernoulli beam and classical bar theories are recovered.

The boundary conditions for the fixed end of the beam are
\begin{align}
    v(0,t) & = 0, & w(0,t) & = 0, & \left. \frac{\partial w(x,t)}{\partial x} \right|_{x=0} & = 0.
\end{align}
The boundary conditions for the forced end of the beam correspond to the forces imparted by the cables, which enter though the secondary variables in the finite element formulation \cite{Reddy2004}.
Assuming a small cable attachment distance $\delta \ll 1$, these forces are
\begin{subequations}
    \begin{align}
        \left. N_{xx}\right|_{x=L}                                                                   & =-\left(u_1(t) + u_2(t)\right) \cos \theta ,      \label{eq:BC1} \\
        \left[\frac{\partial v}{\partial x} N_{xx} + \frac{\partial M_{xx}}{\partial x}\right]_{x=L} & =  -\left(u_1(t) + u_2(t)\right) \sin \theta ,    \label{eq:BC2} \\
        \left. M_{xx}\right|_{x=L}                                                                   & =\delta\left(u_1(t) - u_2(t)\right) \cos \theta \label{eq:BC3}.
    \end{align}
\end{subequations}
Note how the control inputs $\bu(t) = \begin{bmatrix}
        u_1(t) & u_2(t)
    \end{bmatrix}^\top$ enter through the boundary conditions.
The cable angle $\theta = \theta(\bx)$ is state dependent but can be expressed with simple geometry by
\begin{align*}
    \cos \theta & = \frac{L+w_L}{\sqrt{(L+w_L)^2+v_L^2}}, \quad
    \sin \theta  = \frac{v_L}{\sqrt{(L+w_L)^2+v_L^2}},
\end{align*}
where $w_L \coloneqq w(L,t)$ and $v_L \coloneqq v(L,t)$ represent the horizontal and vertical displacements, respectively, of the tip of the deformed beam, as shown in \cref{fig:beam}.
Approximating $\cos \theta$ and $\sin \theta$ to third-order via Taylor-series expansion with respect to $v_L$ and $w_L$ yields
\begin{align}
    \cos \theta & \approx \left(1 - \frac{v_L^2}{2 L^2} + \frac{w_L v_L^2}{L^3}\right) ,                                                      \label{eq:cosTheta} \\
    \sin \theta & \approx \left(\frac{v_L}{L} - \frac{w_L v_L}{L^2}  + \frac{\left(2 w_L^2 v_L-v_L^3\right)}{2 L^3}\right). \label{eq:sinTheta}
\end{align}

Since the cable forces enter through the secondary variables in the finite element formulation, the boundary conditions \cref{eq:BC1,eq:BC2,eq:BC3} represented with the polynomial expansions \cref{eq:cosTheta,eq:sinTheta} enter directly into the $\G_p$ matrices which define the polynomial structure of the input vector fields.
The work of Kramer et al.  \cite{Kramer2024} requires linear inputs corresponding to the zeroth order approximations  $\cos \theta  \approx 1$ and $\sin \theta \approx 0$.
This example adds three more orders to the input approximation, leading to a cubic semidiscretized system of the form \cref{eq:FOM-Poly} with $\ell=3$.

\subsection{Convergence and Scalability}
We investigate the convergence of the energy functions as the finite element mesh is refined.
The beam is prescribed initial conditions corresponding to a linear displacement field for the transverse and longitudinal directions:
\begin{align*}
    v(x,0) & = x_0 \frac{x}{L}, & w(x,0) & = x_0 \frac{x}{L} ,
\end{align*}
and we compute the future energy for an initial condition $\bx_a$ corresponding to $x_0 = 0.01$.
In \cref{tab:example6_convergenceData}, the number of elements in the finite element model is increased while keeping the degree of the energy function approximation fixed at $d=3$ and $d=4$, respectively.
Each additional element contributes 6 additional degrees of freedom.
The energy function values are shown in the second and third columns of \cref{tab:example6_convergenceData} for the initial condition $\bx_a$, and we see that as the mesh is refined, the energy function values do converge.
In \cref{fig:example6_convergence_n}, we plot the energy function values from the second and third columns of \cref{tab:example6_convergenceData} to more clearly show this convergence.

\pgfplotstableread[col sep=&]{example6_convergenceData_d3.dat}{\tableDataA}
\pgfplotstablegetelem{1}{exponentCoeff}\of\tableDataA
\edef\exponentCoeff{\pgfplotsretval}
\pgfplotstablegetelem{1}{exponentFit}\of\tableDataA
\edef\exponentFit{\pgfplotsretval}

\pgfplotstableread[col sep=&]{example6_convergenceData_d4.dat}{\tableDataB}
\pgfplotstablegetelem{1}{exponentCoeff}\of\tableDataB
\edef\exponentCoeffb{\pgfplotsretval}
\pgfplotstablegetelem{1}{exponentFit}\of\tableDataB
\edef\exponentFitb{\pgfplotsretval}

\pgfplotstablecreatecol[copy column from table={\tableDataB}{[index] 3}] {par1} {\tableDataA}
\pgfplotstablecreatecol[copy column from table={\tableDataB}{[index] 4}] {par2} {\tableDataA}

\makeatletter
\def\pgfmath@error#1#2{\vrule depth 1in}

\def\numorNaN#1{%
    \setbox0\hbox{#1}%
    \ifdim\dp0<1in\box0 \else\makebox[11em]{--}\fi}
\def\bracketorNaN#1{%
    \setbox0\hbox{#1}%
    \ifdim\dp0<1in(\box0 )\else\null\fi}

\begin{table}[htb]
    \centering
    \caption{Degree 3 ($d = 3$) and degree 4 ($d = 4$) future energy function approximations for the finite element beam as the mesh is refined.}
    \pgfplotstabletypeset[ % local config, applies only for this table
        every head row/.style={before row=\toprule,after row=\midrule},
        every last row/.style={after row=\bottomrule},
        col sep=&,
        multicolumn names,
        columns/{n}/.style={column name=$n$},
        columns/mixed1/.style={string type, column type = {l},column name={$\mathcal{E}_3^+(\mathbf{x}_a)$ (CPU Sec)}},
        columns/mixed2/.style={string type, column type = {l},column name={$\mathcal{E}_4^+(\mathbf{x}_a)$ (CPU Sec)}},
        columns={{n},{mixed1},{mixed2}},
        create on use/mixed1/.style={
                create col/assign/.code={%
                        \edef\entry{\noexpand\numorNaN{\noexpand\pgfmathprintnumber[sci,sci zerofill, precision=6]{\thisrow{E_3^+(x_0)}}} \noexpand\bracketorNaN{\noexpand\pgfmathprintnumber[sci,sci zerofill, precision=1]{\thisrow{CPU-sec}}}}%
                        \pgfkeyslet{/pgfplots/table/create col/next content}\entry
                    }
            },
        create on use/mixed2/.style={
                create col/assign/.code={%
                        \edef\entry{\noexpand\numorNaN{\noexpand\pgfmathprintnumber[sci,sci zerofill, precision=6]{\thisrow{par2}}} \noexpand\bracketorNaN{\noexpand\pgfmathprintnumber[sci,sci zerofill, precision=1]{\thisrow{par1}}}}%
                        \pgfkeyslet{/pgfplots/table/create col/next content}\entry
                    }
            },
    ]{\tableDataA}
    \label{tab:example6_convergenceData}
\end{table}

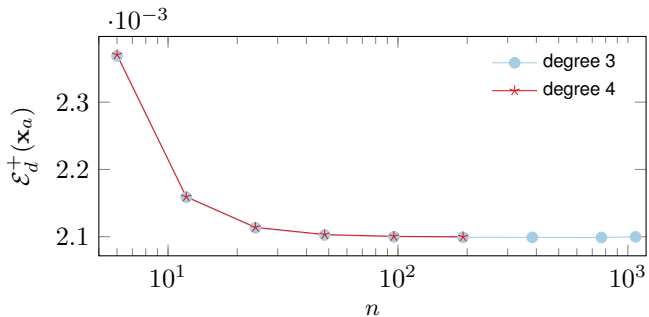
\begin{figure}
    \centering
    \begin{tikzpicture}
        \begin{semilogxaxis}[xlabel=$n$,
                ylabel=$\mathcal{E}_d^+(\mathbf{x}_a)$,
                width=\columnwidth, height=4.5cm,
                xmin=5, xmax=1.2e3,
                ylabel near ticks,
                legend pos=north east,
                legend style={draw=none,font=\scriptsize\sffamily},
                xlabel style={font=\small\sffamily},
                ylabel style={font=\small\sffamily},
            ]

            \addplot[color=Paired-A,mark=*,mark size=2] table[x={n}, y={E_3^+(x_0)},col sep=&] {example6_convergenceData_d3.dat};
            \addlegendentry{degree 3}

            \addplot[color=Paired-F,mark=star,mark size=2] table[x={n}, y={E_4^+(x_0)},col sep=&] {example6_convergenceData_d4.dat};
            \addlegendentry{degree 4}
        \end{semilogxaxis}
    \end{tikzpicture}
    \caption{Convergence w.r.t. $n$ of the future energy function evaluated at $\bx_a$ as the finite element mesh is refined.}
    \label{fig:example6_convergence_n}
\end{figure}

We also present the CPU time required to compute the energy approximations in \cref{tab:example6_convergenceData}
to investigate the scalability of the proposed algorithm as the number of elements is increased.
As can be seen in \cref{fig:example6_cpuScaling_n}, the algorithms do appear to scale roughly as $O(dn^{d+1})$ as predicted in \cref{sec:complexity}.
For low model dimensions, the scaling deviates due to the minimum time required to run the entire program, which is on the order of milliseconds.
Currently, the main hinderance to further scalability is memory usage rather than the mathematical operations themselves, which is not accounted for in a flop count computational complexity analysis.

\begin{figure}
    \centering
    \begin{tikzpicture}
        \begin{loglogaxis}[xlabel=$n$,
                ylabel=CPU sec,
                width=\columnwidth, height=4.5cm,
                xmin=5, xmax=1.2e3,
                ymin=1e-3, ymax=1e4,
                legend pos=south east,
                legend style={draw=none,font=\scriptsize\sffamily},
                xlabel style={font=\small\sffamily},
                ylabel style={font=\small\sffamily},
            ]
            \addplot[domain=5:2e3, samples=10, color=lightgray, thick, forget plot] {6*10^(-8) * x^3} node [pos=.8, fill=white, sloped] {$n^{3}$};
            \addplot[domain=5:6e2, samples=10, color=lightgray, thick, forget plot] {6*10^(-8) * x^4} node [pos=.8, fill=white, sloped] {$n^{4}$};
            \addplot[domain=5:6e2, samples=10, color=lightgray, thick, forget plot] {10^(-7) * x^5} node [pos=.6, fill=white, sloped] {$n^{5}$};

            \addplot[color=Paired-A,mark=*,mark size=2] table[x={n}, y={CPU-sec},col sep=&] {example6_convergenceData_d3.dat};
            \addlegendentry{degree 3}

            \addplot[color=Paired-F,mark=star,mark size=2] table[x={n}, y={CPU-sec},col sep=&] {example6_convergenceData_d4.dat};
            \addlegendentry{degree 4}
        \end{loglogaxis}
    \end{tikzpicture}
    \caption{Scaling of CPU time as $n$ increases for $d=3,4$. The computational complexity scales as $O(dn^{d+1})$, as predicted by the flop count.}
    \label{fig:example6_cpuScaling_n}
\end{figure}
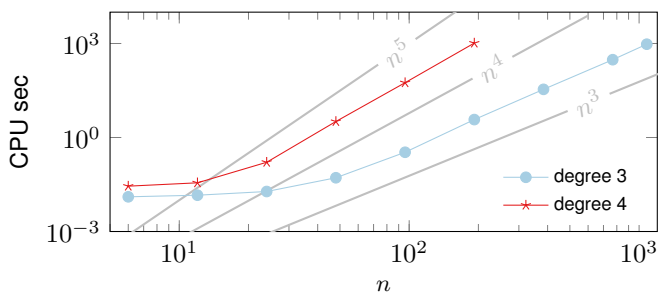

Next, we fix the size of the model $n$ while increasing the degree of the polynomial approximation to the energy functions in \cref{tab:example6_convergence} for the two initial conditions $\bx_a$ and $\bx_b$.
The initial condition $\bx_a$ corresponding to $x_0 = 0.01$ is closer to the equilibrium at the origin, whereas $\bx_b$ corresponds to $x_0 = 0.1$ and is further from the origin.
Intuitively then, as we saw in \cref{sec:example1}, since $\bx_b$ is further from the equilibrium at the origin, we expect it to require a higher-degree energy function approximation in order for the energy to be resolved properly.
We see this in the second and third columns in \cref{tab:example6_convergence} and plotted in \cref{fig:example6_convergence}, where the energy for the initial condition $\bx_a$ converges with just a degree $d=3$ approximation, whereas $\bx_b$ requires a degree $d=4$ approximation in order to converge.

\pgfplotstableread[col sep=&]{example6_convergenceData_e3_biggerIC.dat}\dataB
\pgfplotstableread[col sep=&]{example6_convergenceData_e3.dat}\dataA

\pgfplotstablecreatecol[copy column from table={\dataB}{[index] 1}] {par1} {\dataA}
\pgfplotstablecreatecol[copy column from table={\dataB}{[index] 2}] {par2} {\dataA}

\begin{table}[htb]
    \centering
    \caption{Future energy function approximation for the finite element beam with 3 elements ($n=18$) as the energy function is approximated to higher-orders $d$ for initial conditions $\bx_a,\bx_b$.}
    \pgfplotstabletypeset[ % local config, applies only for this table
    every head row/.style={before row=\toprule,after row=\midrule},
    every last row/.style={after row=\bottomrule},
    col sep=&,
    multicolumn names,
    columns/{d}/.style={
            column name=$d$},
    columns/{CPU-sec-2}/.style={
            sci,sci zerofill,sci sep align,
            precision=2,
            column name=CPU sec a},
    columns/{E_d^+(x_0)}/.style={
    sci,sci zerofill,sci sep align,
    precision=6,
    column name=$\mathcal{E}_d^+(\mathbf{x}_a)$},
    columns/{par1}/.style={
            sci,sci zerofill,sci sep align,
            precision=2,
            column name=CPU sec b},
    columns/{par2}/.style={
            sci,sci zerofill,sci sep align,
            precision=6,
            column name=$\mathcal{E}_d^+(\mathbf{x}_b)$},
    columns/mixed1/.style={string type, column type = {l},column name={$\mathcal{E}_d^+(\mathbf{x}_a)$ (CPU Sec)}},
    columns/mixed2/.style={string type, column type = {l},column name={$\mathcal{E}_d^+(\mathbf{x}_b)$ (CPU Sec)}},
    columns={{d},{mixed1},{mixed2}},
    create on use/mixed1/.style={
            create col/assign/.code={%
                    \edef\entry{\noexpand\numorNaN{\noexpand\pgfmathprintnumber[sci,sci zerofill, precision=6]{\thisrow{E_d^+(x_0)}}} \noexpand\bracketorNaN{\noexpand\pgfmathprintnumber[sci,sci zerofill, precision=1]{\thisrow{CPU-sec-2}}}}%
                    \pgfkeyslet{/pgfplots/table/create col/next content}\entry
                }
        },
    create on use/mixed2/.style={
            create col/assign/.code={%
                    \edef\entry{\noexpand\numorNaN{\noexpand\pgfmathprintnumber[sci,sci zerofill, precision=6]{\thisrow{par2}}} \noexpand\bracketorNaN{\noexpand\pgfmathprintnumber[sci,sci zerofill, precision=1]{\thisrow{par1}}}}%
                    \pgfkeyslet{/pgfplots/table/create col/next content}\entry
                }
        },
    ]{\dataA}
    \label{tab:example6_convergence}
\end{table}

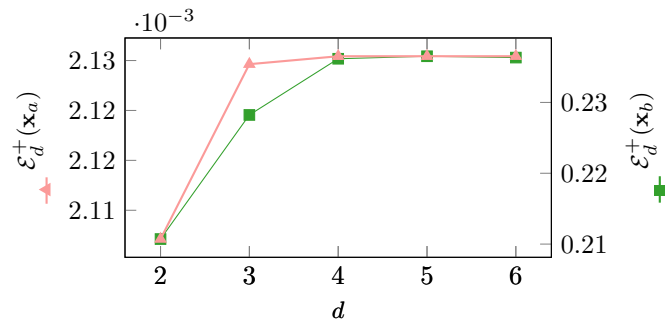
\begin{figure}[htb]
    \centering
    \begin{tikzpicture}
        \begin{axis}[xlabel=$d$,
                ylabel={ \tikz[baseline=-.5ex]\draw[Paired-D,thick] (0,0) -- (10pt,0) node[pos=.25pt,fill=Paired-D,rectangle,scale=0.65] {}; $\mathcal{E}_d^+(\mathbf{x}_b)$},
                width=7.25cm, height=4.5cm,
                % ymin=1.3e-5, ymax=1.6e-5,
                % ylabel near ticks,
                yticklabel pos=right, ytick pos=right,
                xlabel style={font=\small\sffamily},
                ylabel style={font=\small\sffamily},
            ]
            \addplot
            [color=Paired-D,mark=square*]
            table [x=d, y={E_d^+(x_0)},col sep=&] {example6_convergenceData_e3_biggerIC.dat};
        \end{axis}
        \begin{axis}[
                xlabel=$d$,
                ylabel={ \tikz[baseline=-.5ex]\draw[Paired-E,thick] (0,0) -- (10pt,0) node[pos=.725pt,fill=Paired-E,regular polygon, regular polygon sides=3,scale=0.35] {}; $\mathcal{E}_d^+(\mathbf{x}_a)$},
                width=7.25cm, height=4.5cm,
                % ymin=1.3e-7, ymax=1.35e-7,
                % ylabel near ticks,
                yticklabel pos=left, ytick pos=left,
                xlabel style={font=\small\sffamily},
                ylabel style={font=\small\sffamily},
                % ylabel shift = -6 pt,
            ]
            \addplot
            [color=Paired-E,mark=triangle*,thick,mark size=2]
            table [x=d, y={E_d^+(x_0)},col sep=&] {example6_convergenceData_e3.dat};
        \end{axis}
    \end{tikzpicture}
    \caption{Future energy function convergence as the degree $d$ increases for initial conditions $\bx_a,\bx_b$.
        While a degree 3 approximation is sufficient to converge for $\bx_a$, a degree 4 approximation is necessary for $\bx_b$. }
    \label{fig:example6_convergence}
\end{figure}

If the initial condition is too far from the origin, the polynomial approximation is known to diverge to either $\pm \infty$, as was the case with the previous examples.

%%%%%%%%%%%%%%%%%%%%%%%%%%%%%%%%%%%%%%%%%%%%%%%%%%%%%%%%%%%%%%%%%%%%%%%%%%%%%%%%%%%%%%%%%%%%%%%%%
%%%%%%%%%%%%%%%%%%%%%%%%%%%%          Conclusion           %%%%%%%%%%%%%%%%%%%%%%%%%%%%%%%%%%%%%%
%%%%%%%%%%%%%%%%%%%%%%%%%%%%%%%%%%%%%%%%%%%%%%%%%%%%%%%%%%%%%%%%%%%%%%%%%%%%%%%%%%%%%%%%%%%%%%%%%
\section{Conclusion}\label{sec:conclusion}
In this work, we proposed a Kronecker product-based approach to
computing nonlinear balanced truncation energy functions
for control-affine systems.
The three distinct improvements over the approach in \cite{Kramer2024} are the ability to handle arbitrary as opposed to only quadratic drift nonlinearity, the ability to handle polynomial inputs, and the ability to handle polynomial outputs.
As part of making this approach scalable, we derived  explicit structured formulas for the coefficients in the Taylor-series expansions of the energy functions.
Moreover, with the proposed efficient implementation, we showed that the approach scales with a cost $O(dn^{d+1})$, as opposed to the $O(n^{3d})$ cost of a naive implementation.
This was made possible by exploiting symmetry in the coefficients to compactly write the contributions of the many polynomial terms.
We provided numerical results to demonstrate that this approach can compute energy functions for systems with state dimensions up to $n=1080$ on a workstation computer.

The numerical examples further showed that the proposed method can accurately approximate non-quadratic energy functions which are inherent to nonlinear systems.
The results also demonstrated that, since the contribution in this work enables computing the true Taylor expansions of the energy functions, elementary results regarding Taylor expansions can be used to understand characteristics of the approximations computed herein.
The polynomial approximations will converge locally, but care must be taken to ensure that states remain in the function's region of convergence.
We also introduced a new benchmark problem based on a finite element discretization of a nonlinear beam; we used this model to study the scalability of the proposed algorithms.

The next steps for this work involve control and model reduction using the proposed energy function approximations, which would enable reduced-order model and controller design for output-feedback problems.
This requires developing scalable algorithms to compute nonlinear balancing transformations.
Furthermore, in this work, a direct solver was used to compute exact solutions for the energy function polynomial coefficients; the possibility of using iterative solvers and other approximations is of interest in order to a) speed up the computations further, and b) reduce memory requirements.
This can enable computing energy functions and ultimately reduced-order models
in even higher state-space dimensions.

%%%%%%%%%%%%%%%%%%%%%%%%%%%%%%%%%%%%%%%%%%%%%%%%%%%%%%%%%%%%%%%%%%%%%%%%%%%%%%%%%%%%%%%%%%%%%%%%%%%%%%%%%%%%%%%
%%%%%%%%%%%%%%%%%%%%%%%%%%%%%%%%%            Backmatter           %%%%%%%%%%%%%%%%%%%%%%%%%%%%%%%%%%%%%%%%%%%%%
%%%%%%%%%%%%%%%%%%%%%%%%%%%%%%%%%%%%%%%%%%%%%%%%%%%%%%%%%%%%%%%%%%%%%%%%%%%%%%%%%%%%%%%%%%%%%%%%%%%%%%%%%%%%%%%

\bibliography{main}
\bibliographystyle{abbrv} % While writing the paper

% \vspace*{-1cm}
\begin{IEEEbiography}[{\includegraphics[width=1in,height=1.25in,clip,keepaspectratio]{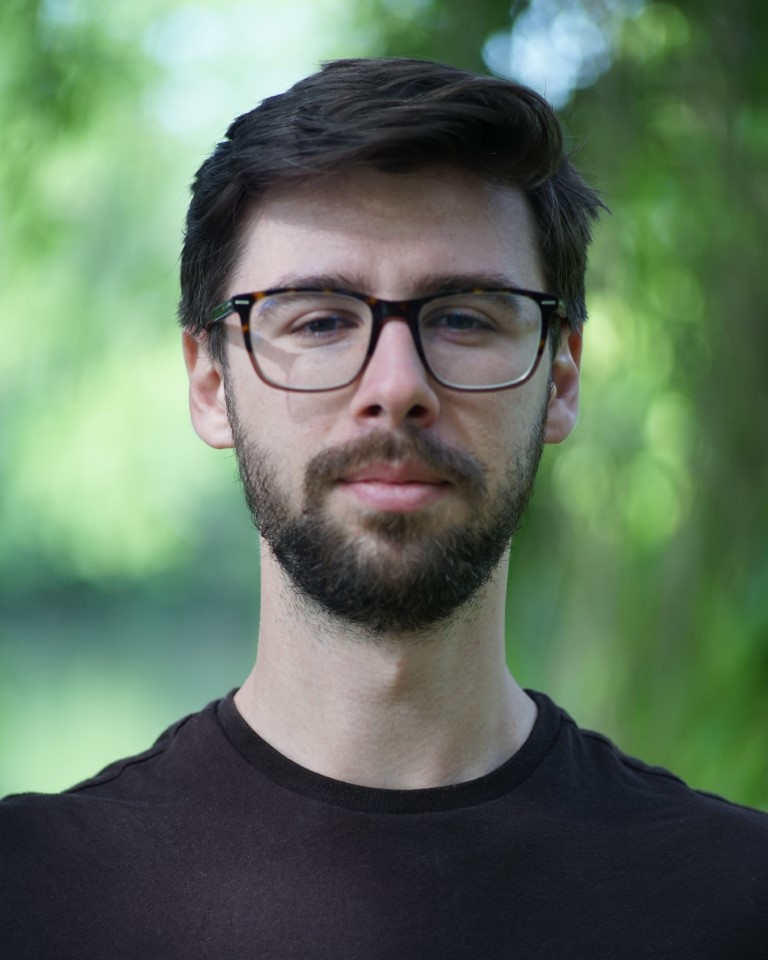}}]{Nicholas A. Corbin} was born in Portsmouth, VA on October 5, 1997. He received the B.S. and M.S. degrees in engineering science and mechanics from Virginia Tech, Blacksburg, Virginia in 2019 and 2021, respectively.

    Since 2021, he has been a Ph.D. student at the University of California San Diego.
    His main research interests are model reduction, nonlinear control, and mechanical vibrations.
\end{IEEEbiography}

\begin{IEEEbiography}[{\includegraphics[width=1in,height=1.25in,clip,keepaspectratio]{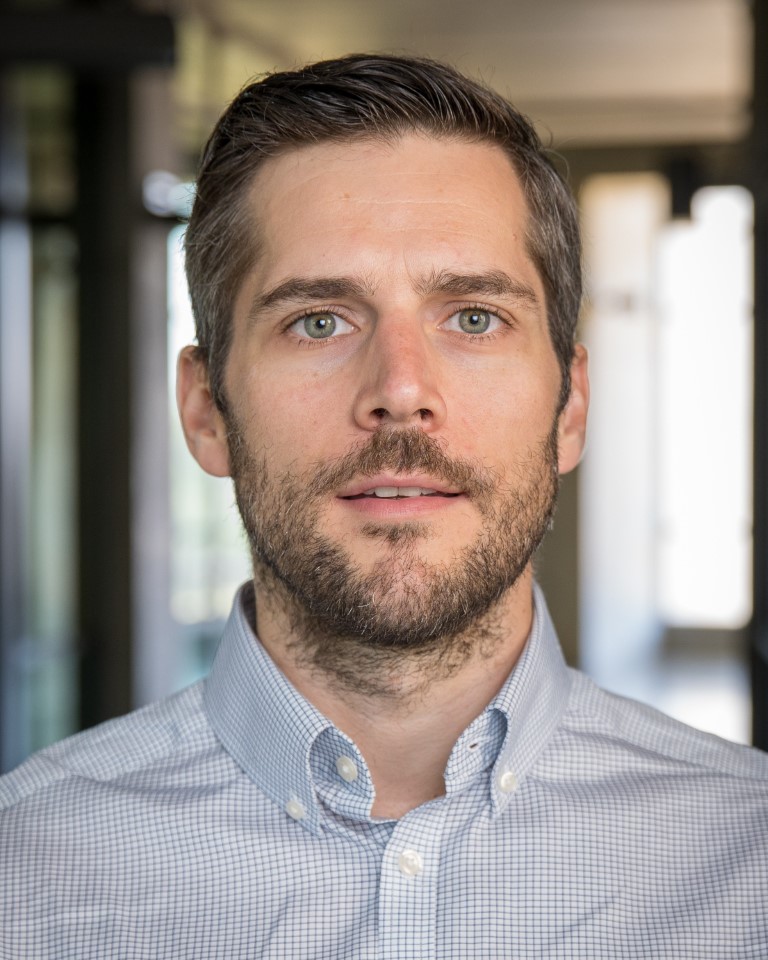}}]{Boris Kr\"{a}mer} received the M.S. and Ph.D. degrees in mathematics from Virginia Tech, Blacksburg, Virginia in 2011 and 2015, respectively.

    He is an Assistant Professor of Mechanical and Aerospace Engineering at the University of California San Diego, USA. Prior to that, he has been a Postdoctoral Scholar at the Massachusetts Institute of Technology from 2015-2019. His main research interests are model reduction, data-driven modeling, (multifidelity) uncertainty quantification, and design under uncertainty.
    Dr. Kramer is a member of SIAM and a Senior Member of AIAA. He received the National Science Foundation Early CAREER Award in Dynamics, Control and System Diagnostics in 2022 and the Department of Defense Newton Award in 2020. He presently is an Associate Editor for the SIAM/ASA Journal on Uncertainty Quantification.
\end{IEEEbiography}

\end{document}